\documentclass[11pt]{article}
\usepackage[margin=1in]{geometry}
\usepackage{curves}

\usepackage{amssymb,amsfonts,amsmath,amsthm,amscd,mathrsfs,dsfont,bbm}
\usepackage{graphicx,float,epsfig,psfrag}
\usepackage{wrapfig}
\usepackage{subfig}

\footnotesep 14pt
\floatsep 27pt plus 2pt minus 4pt      
\textfloatsep 40pt plus 2pt minus 4pt
\intextsep 27pt plus 4pt minus 4pt

\DeclareMathAlphabet{\mathpzc}{OT1}{pzc}{m}{it}


 \textwidth  6.5in
\textheight 8.5in

\topmargin -0.25in
\oddsidemargin 0.in \evensidemargin 0.in

\newtheorem{propo}{Proposition}[section]
\newtheorem{lemma}[propo]{Lemma}
\newtheorem{definition}[propo]{Definition}
\newtheorem{corollary}[propo]{Corollary}
\newtheorem{thm}[propo]{Theorem}

\def\E{{\mathbb E}}
\def\prob{{\mathbb P}}
\def\eps{\varepsilon}

\def\ueps{\underline{\varepsilon}}
\def\de{{\rm d}}
\def\deps{\dot{\varepsilon}}
\def\cX{{\cal X}}
\def\cY{{\cal Y}}
\def\F{{\mathbb F}}
\def\cF{{\cal F}}
\def\1{{\mathds 1}}

\newcommand{\wQ}{\widetilde{Q}} 

\newcommand{\reals}{{\mathds R }}

\newcommand{\X}{\mathcal{X}} 
\newcommand{\Y}{\mathcal{Y}}

\newcommand{\mZ}{\mathbb{Z}}

\newcommand{\pp}{\mathbb{P}}

\newcommand{\e}{\varepsilon}

%

\begin{document}

%
%

\title{Conditional Random Fields, Planted Constraint Satisfaction, \\and Entropy Concentration
}

\author{Emmanuel Abbe\thanks{~Department of Electrical Engineering and Program in Applied and Computational Mathematics, Princeton University, Email: eabbe@princeton.edu}\;\;\; and\;\; 
Andrea Montanari\thanks{~Departments of  
Electrical Engineering and Statistics, Stanford University, Email: montanari@stanford.edu}
}

\date{}

\maketitle

\begin{abstract}
This paper studies a class of probabilistic models on graphs, where edge variables depend on incident node variables through a fixed probability kernel. The class includes planted constraint satisfaction problems (CSPs), as well as more general structures motivated by coding and community clustering problems. It is shown that under mild assumptions on the kernel and for sparse random graphs, the conditional entropy of the node variables given the edge variables concentrates around a deterministic threshold. 
This implies in particular the concentration of the number of solutions in a broad class of planted CSPs, the existence of a threshold function for the disassortative stochastic block model, and the proof of a conjecture on parity check codes. It also establishes new connections among coding, clustering and satisfiability.  
\end{abstract}
\thispagestyle{empty}

\newpage
\pagenumbering{arabic}

%
%

\section{Introduction}
This paper studies a class of probabilistic models on graphs encompassing models of statistical learning theory, coding theory, and random combinatorial optimization. Depending on the framework, the class may be described as a family of conditional random fields, memory channels, or planted constrained satisfaction problems. We start by providing motivations in the latter framework. 

Constrained satisfaction problems (CSPs) are key components in the theory of computational complexity as well as important mathematical models in various applications of computer science, engineering and physics. 
In CSPs, a set of variables $x_1,\dots,x_n$ is required to satisfy a collection of constraints involving each a subset of the variables. In many cases of interest, the variables are Boolean and the constraints are all of a common type:  
e.g., in $k$-SAT, the constraints require the OR of $k$ Boolean variables or their negations to be TRUE, whereas in $k$-XORSAT, the XOR of the variables or their negations must equal to zero.  
Given a set of constraints and a number of variables, the problem is to decide whether there exists a satisfying assignment. 
In random CSPs, the constraints are drawn at random from a given ensemble, keeping the constraint density\footnote{The ratio of the expected number of constraints per variables.} constant. 
In this setting, it is of interest to estimate the \emph{probability}
that a random instance is satisfiable. One of the fascinating
phenomenon occurring for random instances is the phase transition,
which makes the task of estimating this probability much easier in the
limit of large $n$. For a large class of CSPs, and as $n$ tends to infinity, the probability of being satisfiable tends to a step function, jumping from 1 to 0 when the constraint density crosses a critical threshold. For random $k$-XORSAT the existence of such a critical threshold is proved \cite{2-xorsat,dubois,cuckoo,pittel-xorsat}.  
For random $k$-SAT, $k \geq 3,$ the existence of a $n$-dependent
threshold is proved in \cite{friedgut}. 
However it remains open to show that this threshold converges when $n$ tends to infinity. Upper and lower bounds are known to match up to a 
term  that is of relative order $k\,2^{-k}$ as $k$ 
increases \cite{AchlioetalNature}. Phase transition phenomena in other types of CSPs are also investigated in \cite{AchlioptasTwoCol, MRT09,AchlioetalNature}

In planted random CSPs, a ``planted assignment'' is first drawn, and the constraints are then drawn at random so as to keep that planted assignment a satisfying one. Planted ensembles were investigated in \cite{barthel,haanpaa,AAAI,21,jia,barriers}, and at high density in \cite{altarelli,vilenchik,feige2006complete}. 
In the planted setting, the probability of being SAT is always one by
construction, and a more relevant question is to determine the actual
\emph{number} of satisfying assignments. One would expect that this
problem becomes easier in the limit of large $n$ due to an asymptotic
phenomenon. This paper shows that, indeed, a concentration phenomenon
occurs: for a large class of planted CSPs (including SAT, NAE-SAT and
XOR-SAT) the normalized logarithm of the number or satisfying
assignment concentrates around a deterministic number. Moreover, this
deterministic threshold  is $n$-independent. 

It is worth comparing the result obtained in this paper for planted
CSPs, with the one obtained in \cite{AMarxiv} for non planted CSPs. 
In that case, the number of solution is zero with positive probability
and therefore the logarithm of the number of solution does not have a
finite expectation. Technically, standard martingale methods does not
allow to prove concentration, even around an $n$-dependent threshold. 
In  \cite{AMarxiv} an interpolation method \cite{GuerraToninelliLimit} is used to prove the existence
of the limit of  a `regularized' quantity, namely the logarithm of the
number of solutions plus one, divided by the number of variables.
A technical consequence of this approach is that the concentration
of this quantity around a value that is independent of $n$ can only be
proved when the UNSAT probability is known to be $O(1/\log(n)^{1+\eps})$.

This paper shows that --in the planted case-- the concentration around
an $n$-independent value holds unconditionally and is exponentially
fast. We use again the interpolation technique
\cite{GuerraToninelliLimit,FranzLeone,FranzLeoneToninelli,PanchenkoTalagrand,BayatiInterpolation,AMarxiv}
but with an interesting twist. While in all the cited references, the
entropy (or log-partition function) is shown to be superaddittive, in
the present setting it turns out to be subaddittive.

Let us also mention that a fruitful  line of work has addressed the
relation between planted random CSPs and their non planted
counterparts in the satisfiable phase  \cite{barriers}, and in \cite{quiet1,quiet2}.
These papers show that, when the number of solutions is sufficiently
concentrated,  planting does not play a critical role in the model. 
It would be interesting to use these ideas to `export' the  concentration result obtained here  to non planted models.

In this paper, we pursue a different type of approach.
Motivated by applications\footnote{Planted models are also appealing to cryptographic
  application, as hard instances with known solutions provide good
  one-way functions.}, in particular in coding theory and community
clustering, we consider extensions of the standard planted
CSPs to a  setting allowing soft probabilistic constraints.  
Within this setting, the planted solution is an unknown vector to be
reconstructed, and the constraints are regarded as noisy observations
of this unknown vector.
For instance one can recover the case of planted random $k$-SAT as follows. Each clause is generated by selecting
first $k$ variable indices $i_1,\dots,i_k$ uniformly at random, representing the hyperedge of a random graph. Then a clause is drawn uniformly among
the ones that are satisfied by the variables 
$x_{i_1},\dots,x_{i_k}$ appearing in the planted assignment. The clause can hence be
regarded as a noisy observation of $x_{i_1},\dots,x_{i_k}$. More generally the formula can be seen as a noisy observation of the planted assignment.  

Our framework extends the above to include numerous examples from coding theory and
statistics. 
Within LDPC or LDGM  codes \cite{RiU05}, encoding is performed by evaluating the
sum of a random subset of information bits and transmitting it through
a noisy communication channel. The selection of the information bits is described by a graphs, drawn at random for the code construction, and the transmission of these bits leads to a noisy observation of the graph variables.  
Similarly, a community clustering block model
\cite{goldenberg2010survey} can
be seen as a random graph model, whereby each edge is a noisy
observation of the community assignments of the adjacent nodes. 
Definitions will be made precise in the next section. 

The conditional probability of the unknown  vector given the noisy
observations takes the form of a graphical model, i.e. factorizes
according to an hypergraph whose nodes correspond to variables and
hyperedges correspond to noisy observations. Such graphical  models have been
studied by many authors in machine learning \cite{lafferty2001conditional} under the name of
`conditional random fields', and in \cite{andrea2008estimating} in the context of LDPC and LDGM codes. The
conditional entropy of the unknown vector given the observations
is used here to quantify the residual uncertainty of the vector. This is equivalent to considering the mutual information between the node and edge variables.  
In such a general setting, we prove that the conditional entropy per
variable concentrates around a well defined deterministic limit. 
This framework allows a unified treatment of a large class of
interesting random combinatorial optimization problems, raises new
connections among them, and opens up to new models. We obtain in
particular a proof of a conjecture posed in \cite{kumar} on
low-density parity-check codes, and the existence of a threshold function for the disassortative stochastic block model \cite{decelle}.

\section{The model}
Let $k$ and $n$ be two positive integers with $n\geq k$.

\begin{itemize}
\item Let $V=[n]$ and $g=(V,E(g))$ be a hypergraph with vertex set $V$ and edge set $E(g) \subseteq E_k(V)$, where 
$E_k(V)$ denotes the set of all possible $\binom{n}{k}$ hyperedges of order $k$ on the vertex set $V$. We will often drop the term ``hyper''.
\item Let $\X$ and $\Y$ be two finite sets called respectively the input and output alphabets. 
Let $Q(\cdot | \cdot )$ be a probability transition function (or channel) from $\X^k$ to $\Y$, i.e., for each $u \in \X^k$, $Q(\cdot | u)$ is a probability distribution on $\Y$. 
\item To each vertex in $V$, we assign a node-variable in $\X$, and to each edge in $E(g)$, we assign an edge-variable in $\Y$. We define 
\begin{align}
P_g(y|x) \equiv  \prod_{I \in E(g)} Q(y_I|x[I]), \quad x \in \X^V, y \in \Y^{E(g)}, \label{direct}
\end{align}
where $y_I$ denotes the edge-variable attached to edge $I$, and $x[I]$ denotes the $k$ node-variables attached to the vertices adjacent to edge $I$.  This defines for a given hypergraph $g$ the probability of the edge-variables given the node-variables. 
\end{itemize}

The above is a type of factor or graphical model, or a planted constraint satisfaction problem with soft probabilistic constraints. 
For each $x \in \X^V$, $P_g(\cdot|x)$ is a product measure on the set of edge-variables. We call $P_g$ a {\bf graphical channel with graph $g$ and kernel $Q$}. We next put the uniform probability distribution on the set of node-variables $\X^V$, and define the a posteriori probability distribution (or reverse channel) by 
\begin{align}
R_g(x|y) \equiv \frac{1}{S_g(y)}  P_g(y|x) 2^{-n}, \quad x \in \X^V, y \in \Y^{E(g)}, \label{reverse}
\end{align}
where
\begin{align}
S_g(y) \equiv  \sum_{x \in \X^V} P_g(y|x) 2^{-n} \label{z-def}
\end{align}
is the output marginal distribution. 


We now define two probability distributions on the hypergraph $g$, which are equivalent for the purpose of this paper:
\begin{itemize}
\item  A sparse Erd\"os-R\'enyi distribution, 
where each edge is drawn independently with probability $p=\frac{\alpha n}{\binom{n}{k}}$, where $\alpha >0$ is the edge density. 
\item A sparse Poisson distribution, where for each $I\in E_k(V)$, a number of edges $m_I$ is drawn independently from a Poisson distribution of parameter $p=\frac{\alpha n}{\binom{n}{k}}$. Note that $m_I$ takes value in $\mZ$, hence $G$ is now a multi-edge hypergraph. To cope with this more general setting, we allow the edge-variable $y_I$ to take value in $\Y^{m_I}$, i.e., $y_I=(y_I(1),\dots,y_I(m_I))$, and define (with a slight abuse of notation) 
\begin{align}
Q(y_I|x[I])=\prod_{i=1}^{m_I} Q(y_I(i)|x[I]).
\end{align}
This means that for each $I$, $m_I$ i.i.d.\ outputs are drawn from the kernel $Q$. If $m_I=0$, no edge is drawn. We denote by $\mathcal{P}_k(\alpha,n)$ the above distribution on (multi-edge) hypergraphs.
\end{itemize}

Since $p=\frac{\alpha n}{\binom{n}{k}}$, the number of edges concentrates around its expectation given by $\alpha n$ and the two models are equivalent in the limit of large $n$ --- at least they are equivalent for the subsequent results. 

\section{Main Results}\label{main-sec}
We now define the conditional entropy between the node and edge variables. This is equivalent, up to a constant shift, to the mutual information between the node and edge variables. 
\begin{definition}
Let $X$ be uniformly drawn in $\X^n$, $G$ be a random sparse hypergraph drawn from the $\mathcal{P}_k(\alpha,n)$ ensemble independently of $X$, and $Y$ be the output of $X$ through the graphical channel $P_G$ defined in \eqref{direct} for a kernel $Q$.
We define 
\begin{align}
H_G^{(n)}(X|Y) &\equiv - 2^{-n} \sum_{x \in \X^V} \sum_{y \in \Y^{E(G)}} P_G(y|x)  \log R_G(x|y), \\
H^{(n)}(X|Y) &\equiv \E_G H_G^{(n)}(X|Y),
\end{align}
where $P_G$ and $R_G$ are defined in \eqref{direct} and \eqref{reverse} respectively. Note that $H_G^{(n)}(X|Y)$ is a random variable since $G$ is random, and for a realization $G=g$, $H_g^{(n)}(X|Y)$ is the conditional entropy of $X$ given $Y$, which can be expressed as $H_g^{(n)}(X|Y)=\E_y H_g^{(n)}(X|Y=y)$. Note that the mutual information between the node and edge variables is also obtained as $I_g^{(n)}(X|Y)=n-H_g^{(n)}(X|Y)$.

\end{definition} 

\begin{definition}
We denote by $M_1(\cX^l)$ the set of probability measures on $\cX^l$. For a kernel $Q$ from $\cX^k$ to $\cY$, we define  
\begin{align}
\Gamma_l: \, M_1(\cX^l) &\to \reals \\
\nu & \mapsto \Gamma_l(\nu)= \frac{1}{|\cY|} \sum_{u^{(1)}, \dots,u^{(l)} \in \cX^k} \left[ \sum_{z \in \cY}  \prod_{r=1}^l  (1-Q(z|u^{(r)}))  \right]  \prod_{i=1}^k \nu(u_i^{(1)},\dots,u_i^{(l)})  \label{gamma} \,.
\end{align}
\end{definition}

\noindent
{\bf Hypothesis H.} A kernel $Q$ is said to satisfy hypothesis H if $\Gamma_l$ is convex for any $l \geq 1$. \\

\noindent
Despite the lengthy expression, it is important to note that the definition of $\Gamma_l$ depends solely on the kernel $Q$. 
We will see in Section \ref{ex} that a large variety of kernels satisfy this hypothesis, including kernels corresponding to parity-check encoded channels, planted SAT, NAE-SAT, XORSAT, and disassortative stochastic block models. 

We first show a sub-additivity property for the expected conditional entropy of graphical channels. 
\begin{thm}\label{add}
Let $Q$ be a kernel satisfying hypothesis H and $n=n_1+n_2$, with $n_1,n_2 \geq k$. Then 
\begin{align}
H^{(n)}(X|Y) \leq H^{(n_1)}(X|Y) + H^{(n_2)}(X|Y).
\end{align}
\end{thm}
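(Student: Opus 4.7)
The plan is to use an interpolation argument in the Guerra-Toninelli-Franz-Leone style, but with the direction reversed: the log-partition-type quantity $A:=\E\log S_G(Y)$ will turn out to be \emph{subadditive} in $n$ here, rather than superadditive as in the standard interpolation settings. Partition $V=V_1\sqcup V_2$ with $|V_j|=n_j$ and consider, for $t\in[0,1]$, the family of Poisson graphical channels in which each hyperedge $I\in E_k(V)$ has independent Poisson multiplicity of rate $\lambda_I(t)=tp+(1-t)p_0(I)$, with $p=\alpha n/\binom{n}{k}$, and $p_0(I)=\alpha n_j/\binom{n_j}{k}$ if $I\subseteq V_j$, $p_0(I)=0$ if $I$ is crossing. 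By Poisson superposition, $t=1$ recovers the full model on $V$, $t=0$ splits into two independent models on $V_1$ and $V_2$, and the total expected number of edges $\sum_I\lambda_I(t)\equiv\alpha n$ is conserved. Because $H_G(X|Y)=n\log|\cX|+c\cdot|E(G)|-H_G(Y)$ with $c:=\E_{U\sim\text{Unif}(\cX^k)}[H(Q(\cdot|U))]$ depending only on the kernel, taking $\E_G$ reduces $H^{(n)}\leq H^{(n_1)}+H^{(n_2)}$ to $A_n\leq A_{n_1}+A_{n_2}$, i.e., to $dA/dt\leq 0$.

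Differentiating $A(t)$ and using the Poisson identity $\partial_\lambda\E f(m)=\E[f(m+1)-f(m)]$ gives
\[
\frac{dA}{dt}=\sum_I\lambda_I'(t)\,\E\!\left[\log\textstyle\sum_u\mu_I(u|Y)Q(y'|u)\right],
\]
where $\mu_I(\cdot|Y)$ is the posterior marginal of $X[I]$ and $y'\sim Q(\cdot|X[I])$ is a fresh observation. The signs of $\lambda_I'$ are mixed ($<0$ for internal, $>0$ for crossing), so no term-by-term argument works. Instead I expand $-\log(\cdot)$ as a power series and invoke the Nishimori identity for the planted model: under the uniform prior and Bayes-consistent posterior, $(X,X^{(1)},\ldots,X^{(l)})$ are jointly exchangeable, where the $X^{(r)}$ are iid posterior samples given $Y$. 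After a short manipulation, the $l$-th term of the expansion becomes a telescopic difference involving $\Psi_l(u^{(1)},\ldots,u^{(l)}):=\sum_z\prod_r(1-Q(z|u^{(r)}))$. For each $W\in\{V,V_1,V_2\}$, vertex exchangeability within $W$ identifies the average of $\Psi_l(X^{(1)}[I],\ldots,X^{(l)}[I])$ over $I\subseteq W$ with $|\cY|\,\Gamma_l(\hat\nu_W)$, where $\hat\nu_W:=|W|^{-1}\sum_{i\in W}\delta_{(X_i^{(1)},\ldots,X_i^{(l)})}$ is the column-empirical distribution of the $l$ replicas restricted to $W$.

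Reindexing the series in $l$ then collapses the $\Psi_l$-$\Psi_{l+1}$ telescope; the $l=1$ contribution is killed because $\Gamma_1=1-|\cY|^{-1}$ is constant and $n-n_1-n_2=0$. The derivative takes the clean form
\[
\frac{dA}{dt}=-\alpha|\cY|\sum_{l\geq 2}\frac{1}{l(l-1)}\,\E\!\left[n_1\Gamma_l(\hat\nu_{V_1})+n_2\Gamma_l(\hat\nu_{V_2})-n\,\Gamma_l(\hat\nu_V)\right].
\]
Since $\hat\nu_V=(n_1/n)\hat\nu_{V_1}+(n_2/n)\hat\nu_{V_2}$ by additivity of empirical measures over disjoint blocks, hypothesis H (convexity of every $\Gamma_l$) makes each bracket nonnegative pointwise, whence $dA/dt\leq 0$ and the theorem follows. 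The main obstacle I foresee is the Nishimori-plus-Taylor step that produces precisely the $\Gamma_l$ of the paper (with its product structure $\prod_{i=1}^k\nu(u_i^{(1)},\ldots,u_i^{(l)})$): this requires treating the $k$ positions of a random hyperedge as iid draws from the column-empirical measure, which is exact in a Poisson variant on iid $k$-tuples but incurs $O(1/n)$ combinatorial corrections in the subset-based model stated here.
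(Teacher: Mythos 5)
Your proposal is correct and takes essentially the same route as the paper: a balanced Poisson interpolation between the full ensemble and the two split ensembles, the Poisson derivative identity yielding per-edge terms (your $\E\log\sum_u\mu_I(u|Y)Q(y'|u)$ is exactly $-H(Y_I|Y(t))$, so working with $A=\E\log S_G(Y)$ rather than $H(X|Y(t))$ is only a cosmetic shift by an affine term that the balanced path keeps constant), then the log-series expansion with replicas, the planted-model exchangeability (Nishimori) telescoping into the $\sum_{l\ge 2}\tfrac{1}{l(l-1)}\Gamma_l$ series with the $l=1$ term cancelling because $n=n_1+n_2$, and finally Hypothesis H applied to $\hat\nu_V=\tfrac{n_1}{n}\hat\nu_{V_1}+\tfrac{n_2}{n}\hat\nu_{V_2}$. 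The one obstacle you flag---that identifying the average over $I\in E_k(W)$ with $\Gamma_l$ of the column-empirical measure treats the $k$ vertices of a hyperedge as i.i.d.\ draws and is exact only up to $O(1/n)$ in the subset-based model---is equally present in the paper, which simply asserts that the two definitions of $\Gamma_l$ coincide, so it is not a point where you diverge from the paper's argument.
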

The proof of this theorem is outlined in Section \ref{interpolation-sec}.

\begin{corollary}\label{conv}
Let $Q$ be a kernel satisfying hypothesis H. There exists $C_k(\alpha,Q)$ such that 
\begin{align}
\frac{1}{n} H^{(n)}(X|Y) \to C_k(\alpha,Q), \quad \text{as $n \to \infty$}.
\end{align} 
\end{corollary}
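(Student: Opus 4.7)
The plan is to deduce the corollary from Theorem \ref{add} by a direct application of Fekete's subadditive lemma, adapted to the mild restriction that the two summands must each be at least $k$. Set $a_n := H^{(n)}(X|Y)$ for $n \geq k$. Theorem \ref{add} gives the subadditivity
\[
a_{n_1+n_2} \leq a_{n_1} + a_{n_2}, \qquad n_1, n_2 \geq k,
\]
and the sequence is trivially nonnegative (being an expectation of conditional entropies) and bounded above by $a_n \leq n \log|\X|$. These are exactly the ingredients required to conclude that $a_n/n$ converges to $\inf_{n \geq k} a_n/n$, which we then define to be $C_k(\alpha,Q)$.

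The Fekete argument proceeds as follows. Let $L := \inf_{n\geq k} a_n/n \in [0,\log|\X|]$; by definition $\liminf_{n\to\infty} a_n/n \geq L$. For the matching upper bound, fix $\varepsilon > 0$ and pick $m \geq k$ with $a_m/m \leq L+\varepsilon$. Given $n$ large, apply the division algorithm to write $n = qm + r$ with $0 \leq r < m$; if the remainder $r$ happens to satisfy $r < k$, one replaces the decomposition by $n = (q-1)m + (m+r)$, so that the tail now lies in $[k, m+k)$ and every piece in the iterated split is admissible for Theorem \ref{add}. Iterating subadditivity then yields
\[
a_n \;\leq\; q\, a_m \;+\; \max_{k \leq s < m+k} a_s \;\leq\; q\, a_m \;+\; (m+k)\log|\X|.
\]
Dividing by $n$ and sending $n\to\infty$ (noting $q/n \to 1/m$ and the tail contributes $O(1/n)$) gives $\limsup_{n\to\infty} a_n/n \leq a_m/m \leq L + \varepsilon$. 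Since $\varepsilon$ was arbitrary, $\lim_{n\to\infty} a_n/n = L$.

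There is no genuine obstacle here beyond bookkeeping. The only deviation from the textbook Fekete lemma is the constraint $n_1,n_2\geq k$ rather than $n_1,n_2\geq 1$, which is absorbed at a cost of a single $O(1)$ boundary term by shifting the remainder into $[k,m+k)$ as above; the uniform bound $a_s \leq s\log|\X|$ makes this term negligible after dividing by $n$. All the substance of the corollary is contained in Theorem \ref{add}; the convergence itself is soft.
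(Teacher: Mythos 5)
Your proof is correct and matches the paper's route: the paper derives Corollary \ref{conv} from the subadditivity in Theorem \ref{add} exactly via Fekete's lemma, which is what you carry out, and your handling of the $n_1,n_2\ge k$ constraint (shifting the remainder into $[k,m+k)$ and absorbing it with the uniform bound $a_s\le s\log|\X|$) is the right bookkeeping. Nothing further is needed.
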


The following is obtained using previous corollary and a concentration argument. 
\begin{thm}\label{conc}
Let $Q$ be a kernel satisfying hypothesis H, then, almost surely,
\begin{align}
\lim_{n\to\infty}\frac{1}{n} H_G^{(n)}(X|Y) = C_k(\alpha,Q),
\end{align} 
with $C_k(\alpha,Q)$ as in Corollary \ref{conv}. 
\end{thm}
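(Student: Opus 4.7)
The plan is to bootstrap Corollary \ref{conv}, which gives convergence in mean $\tfrac{1}{n}H^{(n)}(X|Y)\to C_k(\alpha,Q)$, to an almost-sure statement via exponential concentration of the random variable $H_G^{(n)}(X|Y)$ around its expectation. Concretely, the aim is to prove that for every $\epsilon>0$ there exists $c(\epsilon)>0$ such that
\[
\prob\!\left(\bigl|H_G^{(n)}(X|Y)-H^{(n)}(X|Y)\bigr|>n\epsilon\right)\leq 2e^{-c(\epsilon)n},
\]
after which Borel-Cantelli applied to the summable tails, combined with Corollary \ref{conv}, delivers the result.

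The central deterministic input is a one-edge Lipschitz bound for $H_g^{(n)}(X|Y)$. If $g'=g\cup\{I\}$ differs from $g$ by the addition of a single (multi-)hyperedge $I$, the chain rule for entropy gives
\[
H_g^{(n)}(X|Y)-H_{g'}^{(n)}(X|Y)\;=\;I(X;Y_I\mid Y_{E(g)})\;\in\;[0,\log|\Y|],
\]
since the conditional mutual information is bounded by $H(Y_I)\leq \log|\Y|$. In particular, inserting or removing a single edge shifts the conditional entropy by at most $\log|\Y|$.

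I would then realise $G\sim\mathcal{P}_k(\alpha,n)$ by first drawing its total edge count $M$, which is Poisson with mean $\alpha n$, and conditionally on $M$ sampling $M$ i.i.d.\ uniform hyperedges from $E_k(V)$. Exposing these edges one at a time produces a Doob martingale with $M$ increments, each of magnitude at most $\log|\Y|$ by the Lipschitz bound above. Conditional on $M$, Azuma-Hoeffding yields
\[
\prob\!\left(\bigl|H_G^{(n)}(X|Y)-\E[H_G^{(n)}(X|Y)\mid M]\bigr|>\tfrac{n\epsilon}{2}\,\Big|\,M\right)\leq 2\exp\!\left(-\frac{n^2\epsilon^2}{8M(\log|\Y|)^2}\right),
\]
which on the typical event $\{M\leq 2\alpha n\}$ is $e^{-\Theta(n)}$. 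Standard Poisson tail estimates bound $\prob(M>2\alpha n)$ by $e^{-\Theta(n)}$, and a second application of the Lipschitz bound gives $\bigl|\E[H_G^{(n)}(X|Y)\mid M]-H^{(n)}(X|Y)\bigr|\leq |M-\alpha n|\,\log|\Y|$, which is $o(n)$ with exponentially high probability. Assembling the three estimates produces the desired exponential concentration.

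The main obstacle is exactly the choice of martingale filtration. A naive McDiarmid/Azuma argument applied to the $\binom{n}{k}$ Bernoulli presence indicators of the equivalent Erd\H{o}s-R\'enyi representation yields only $\exp(-\Theta(n^{2-k}))$ and is vacuous for $k\geq 2$; one must exploit the sparsity $p=\alpha n/\binom{n}{k}$. Conditioning on $M$ and exposing genuine edges, rather than presence-absence indicators of potential edges, is the natural fix. Once the exponential tail bound is in hand, Borel-Cantelli gives $\tfrac{1}{n}\bigl(H_G^{(n)}(X|Y)-H^{(n)}(X|Y)\bigr)\to 0$ almost surely, and Corollary \ref{conv} concludes the argument.
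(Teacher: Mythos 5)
Your proposal is correct and follows essentially the same route as the paper: condition on the number of hyperedges, expose them one at a time, apply Azuma--Hoeffding using the one-edge bound $0\le H(X|Y)-H(X|Y,Y_a)\le \log|\cY|$, and conclude by Borel--Cantelli together with Corollary \ref{conv}; your treatment of the Poisson fluctuations of $M$ just makes explicit what the paper's sketch leaves implicit. The only nitpick is that swapping one exposed hyperedge for another changes the entropy by at most $2\log|\cY|$ (remove plus add), so the martingale increments carry the constant $2\log|\cY|$ rather than $\log|\cY|$, which does not affect the exponential rate.
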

The proof is given in Section \ref{main-proof}.


\section{Applications}\label{ex}
We next present three applications of the general model described in previous section.  
While planted CSPs and parity-check codes are directly derived as particular cases of our model, the stochastic block model is obtained with a limiting argument. One of the advantages of relying on a general model class, is that it allows to consider new hybrid structures. For example, one may consider codes which are not linear but which rely on OR gates as in SAT, or on community models whose connectivity rely on collections of $k$ nodes.    
\subsection{Planted constraint satisfaction problems}\label{csp-sec}


\begin{definition}\label{csp-def}
A CSP kernel is given by 
\begin{align}
Q(z|u) = \frac{1}{|A(u)|} \mathds{1} (z \in A(u)), \quad u\in \X^k, z \in \Y, \label{csp-kernel} 
\end{align}
where $A(u)$ is a subset of $\Y$ containing the ``authorized constraints'', with the property that $|A(u)|$ is constant (it may depend on $k$ but not on $u$). 
\end{definition}

We next show that a graphical channel with a CSP kernel corresponds to a planted CSP. 
We derive first a few known examples from this model.

\begin{itemize}
\item For planted $k$-SAT, $\Y=\{0,1\}^k$ and $A(u) = \{0,1\}^k \setminus  \bar{u}$, where $\bar{u}$ is the vector obtained by flipping each component in $u$. 
Using this kernel for the graphical channel means that for any selected edge $I \in E_k(V)$, the edge variable $y_I$ is a vector in $\{0,1\}^k \setminus  \bar{x}[I]$ uniformly drawn, representing the negation pattern of the constraint $I$. 
Note that using $u$ rather than $\bar{u}$ leads to an equivalent probabilistic model, $\bar{u}$ is simply used here to represent $u$ as a ``satisfying assignment''. Note that $|A(u)|=2^k-1$.  
\item For planted $k$-NAE-SAT, $\Y=\{0,1\}^k$ and $A(u) =\{0,1\}^k \setminus \{u, \bar{u}\}$, with $|A(u)|=2^k-2$. 
\item For $k$-XOR-SAT, $\Y=\{0,1\}$ and $A(u)= \oplus_{i=1}^k u_i$ and $|A(u)|=1$. 
%
\end{itemize}
In general, a graphical channel with graph $g$ and kernel $Q$ as in \eqref{csp-kernel} leads to a planted CSP where the constraints are given by $A(x[I]) \ni y_I$ for any $I \in E(g)$. For example, for planted $k$-SAT, the constraints are $\bar{x}[I] \neq y_I$, whereas for planted $k$-NAE-SAT, the constraints are $\bar{x}[I] \notin (y_I,  \bar{y}_I)$. If $y$ is drawn from the output marginal distribution $S_g$ (cf.\ \eqref{z-def}), then there exists a satisfying assignment by construction.

\begin{lemma}\label{counting-lemma}
For a graphical channel with graph $g$ and CSP kernel $Q$ as in \eqref{csp-kernel}, and for $y$ in the support of $S_g$, 
\begin{align}
H_g(X|Y=y) = \log Z_g(y)
\end{align} 
where $Z_g(y)$ is the number of satisfying assignments of the corresponding planted CSP.
\end{lemma}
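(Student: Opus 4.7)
The plan is to unpack the definitions and observe that for a CSP kernel the posterior $R_g(\cdot|y)$ is simply the uniform distribution on satisfying assignments, after which the entropy formula is immediate.

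First I would substitute the CSP kernel into the product form \eqref{direct}. Since $|A(u)|$ is a constant depending only on $k$, call it $a$, and since each factor $Q(y_I|x[I])=a^{-1}\mathds{1}(y_I\in A(x[I]))$, we get
\begin{align}
P_g(y|x) = a^{-|E(g)|}\prod_{I\in E(g)}\mathds{1}(y_I\in A(x[I])) = a^{-|E(g)|}\,\mathds{1}\bigl(x\text{ satisfies }y\bigr),
\end{align}
where ``$x$ satisfies $y$'' means that the constraint $y_I\in A(x[I])$ holds at every edge $I\in E(g)$. By definition $Z_g(y)$ is the number of such $x$, so summing over $x\in\X^V$ and dividing by $|\X|^n$ gives $S_g(y)=a^{-|E(g)|}|\X|^{-n}Z_g(y)$.

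Next I would compute the reverse channel from \eqref{reverse}: the factor $a^{-|E(g)|}|\X|^{-n}$ cancels between numerator and denominator, leaving
\begin{align}
R_g(x|y)=\frac{\mathds{1}(x\text{ satisfies }y)}{Z_g(y)},
\end{align}
which is well defined since $y$ lies in the support of $S_g$, so $Z_g(y)\geq 1$. In other words, conditional on the observation $y$, the posterior over $X$ is uniform over the $Z_g(y)$ satisfying assignments.

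Finally, the conditional entropy given the observation is
\begin{align}
H_g(X|Y=y) = -\sum_{x\in\X^V} R_g(x|y)\log R_g(x|y) = -\sum_{x\text{ sat }y}\frac{1}{Z_g(y)}\log\frac{1}{Z_g(y)} = \log Z_g(y),
\end{align}
which proves the lemma. There is no real obstacle here; the only thing to verify carefully is that $|A(u)|$ being constant in $u$ is what makes the kernel-dependent factors drop out of $R_g$, so that the posterior is uniform rather than merely proportional to an indicator weighted by assignment-dependent factors.
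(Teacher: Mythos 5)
Your proof is correct and follows essentially the same route as the paper: both arguments use the constancy of $|A(u)|$ to show that $P_g(y|x)$ is a constant times the indicator that $x$ satisfies $y$, so the posterior $R_g(\cdot|y)$ is uniform on the $Z_g(y)$ satisfying assignments, giving entropy $\log Z_g(y)$. The only cosmetic difference is that the paper writes the prior factor as $2^{-n}$ while you write $|\X|^{-n}$; in either case it cancels, so nothing changes.
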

\begin{corollary}
For a graphical channel with CSP kernel $Q$ as in \eqref{csp-kernel}, and for a graph drawn from the ensemble $\mathcal{P}(\alpha,n)$, 
\begin{align}
H^{(n)}(X|Y) = \E_{G,Y} \log Z_G(Y),
\end{align} 
where $Z_G(Y)$ is the number of satisfying assignments of the corresponding random planted CSP.
\end{corollary}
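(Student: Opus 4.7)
The plan is to chain together the definition of $H^{(n)}(X|Y)$, the tower/law-of-total-expectation for conditional entropy, and Lemma \ref{counting-lemma}. Concretely, I would start from the definition
\begin{align}
H^{(n)}(X|Y) = \E_G\, H_G^{(n)}(X|Y) = \E_G \E_{Y\mid G}\, H_G(X\mid Y=y),
\end{align}
where the inner expectation is over $Y$ drawn according to the output marginal $S_G$ (recall that $H_g^{(n)}(X|Y)=\E_y H_g(X|Y=y)$ was stated explicitly in the definition section).

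Next, I would invoke Lemma \ref{counting-lemma}: for every realization $G=g$ and every $y$ in the support of $S_g$, one has $H_g(X|Y=y)=\log Z_g(y)$. Since $Y\sim S_G$ lives in the support of $S_G$ with probability one, one may substitute $\log Z_G(Y)$ for $H_G(X|Y=Y)$ inside the conditional expectation without worrying about a null set. This gives
\begin{align}
H^{(n)}(X|Y) = \E_G \E_{Y\mid G}\, \log Z_G(Y) = \E_{G,Y}\, \log Z_G(Y),
\end{align}
the last step being Fubini applied to the joint distribution of $(G,Y)$ obtained by first drawing $G\sim\mathcal{P}_k(\alpha,n)$ and then $Y\sim S_G$.

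There is essentially no obstacle here: the content is entirely packaged in Lemma \ref{counting-lemma}, and the corollary is just the statement of that lemma averaged over the graph distribution. The only minor care needed is the support issue — ensuring we only apply the lemma on the event $\{y\in\supp(S_G)\}$ — but this event has full measure under the joint law of $(G,Y)$, so it drops out of the final expression.
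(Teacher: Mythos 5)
Your proposal is correct and matches the paper's (implicit) argument: the corollary is obtained exactly by averaging Lemma \ref{counting-lemma} over the joint law of $(G,Y)$, with $Y\sim S_G$, which is why the paper states it without further proof. Your remark on the support of $S_G$ is the only point needing care, and you handle it correctly since $Y$ lies in $\supp(S_G)$ almost surely.
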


\begin{lemma}\label{k-sat-lemma}
For any $k\geq 1$, and for the CSP kernel corresponding to planted $k$-SAT, the operator $\Gamma_l$ is convex for any $l \geq 1$. 
\end{lemma}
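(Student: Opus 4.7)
The plan is to exhibit $\Gamma_l(\nu)$, for the planted $k$-SAT kernel, as a non-negative linear combination of $k$-th powers of non-negative linear functionals of $\nu$. Since $x\mapsto x^k$ is convex on $[0,\infty)$ for $k\geq 1$ and linear maps in $\nu$ are non-negative here, any such combination is automatically convex, which will give the lemma.

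The starting observation is the algebraic identity for the $k$-SAT kernel
\[ 1 - Q(z|u) \;=\; q \;+\; \frac{1}{2^k-1}\,\mathds{1}(z = \bar u), \qquad q := \frac{2^k-2}{2^k-1}, \]
combined with the crucial coordinate-wise factorization $\mathds{1}(z = \bar u) = \prod_{i=1}^k \mathds{1}(z_i = \bar u_i)$. I will then expand $\prod_{r=1}^l (1-Q(z|u^{(r)}))$ by distributing in each of the $l$ factors and indexing the resulting sum by the subset $S\subseteq[l]$ of factors in which the indicator summand is chosen. This produces a sum over $S$ with coefficient $q^{l-|S|}(2^k-1)^{-|S|}$ multiplied by $\prod_{i=1}^k \mathds{1}\!\bigl(u_i^{(r)} = \bar z_i \text{ for all } r\in S\bigr)$, where the indicator is now factorized across the $k$ input coordinates.

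Next I will switch to ``column'' variables $v_i := (u_i^{(1)},\ldots,u_i^{(l)}) \in \cX^l$ for $i=1,\ldots,k$. In these variables the weight $\prod_i \nu(u_i^{(1)},\ldots,u_i^{(l)})$ becomes $\prod_i \nu(v_i)$ and each indicator becomes a condition on $v_i$ alone, namely $v_i|_S \equiv \bar z_i$. Substituting into the definition of $\Gamma_l$ and exchanging summations, the sum over $v_1,\ldots,v_k$ factorizes into $\prod_{i=1}^k \mu_S(\bar z_i)$ with $\mu_S(c):=\nu(\{v\in\cX^l : v|_S\equiv c\})$. The sum over $z\in\{0,1\}^k$ then collapses by the binomial identity
\[ \sum_{z\in\{0,1\}^k}\prod_{i=1}^k \mu_S(\bar z_i) \;=\; \bigl(\mu_S(0)+\mu_S(1)\bigr)^k \;=\; L_S(\nu)^k, \]
where $L_S(\nu):=\mu_S(0)+\mu_S(1)$ is a non-negative linear functional of $\nu$ (for $|S|\geq 1$ it equals $\nu(\{v: v|_S \text{ is constant}\})$, and for $S=\emptyset$ it equals the constant $2$).

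Putting the pieces together yields the target identity
\[ \Gamma_l(\nu) \;=\; 2^{-k} \sum_{S\subseteq[l]} q^{\,l-|S|}\,(2^k-1)^{-|S|}\; L_S(\nu)^k, \]
which displays $\Gamma_l$ as a non-negative combination of $k$-th powers of non-negative linear functionals of $\nu$, hence convex for every $l\geq 1$. No structural obstacle is anticipated: the only delicate point is the combinatorial bookkeeping in the expansion and the change of variables, and the entire scheme hinges on the factorization $\mathds{1}(z = \bar u)=\prod_i \mathds{1}(z_i=\bar u_i)$, specific to the per-coordinate structure of the $k$-SAT forbidden set, which is precisely what makes the sum over $z$ collapse cleanly into a $k$-th power.
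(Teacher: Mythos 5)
Your proposal is correct, and its engine is the same as the paper's: exhibit $\Gamma_l(\nu)$ as a non-negative combination of $k$-th powers of non-negative linear functionals of $\nu$, with the key term being $\bigl(\sum_{u_1\in\cX}\nu(u_1,\dots,u_1)\bigr)^k$ coming from the event that all replicas equal $\bar z$. The difference is in completeness. The paper's displayed computation in effect substitutes $1-Q(z|u)\to\frac{1}{2^k-1}\mathds{1}(z=\bar u)$, i.e.\ it keeps only your $S=[l]$ term; for $k\ge 2$ this is not an exact identity, since $1-Q(z|u)=\frac{2^k-2}{2^k-1}+\frac{1}{2^k-1}\mathds{1}(z=\bar u)$, which is precisely your starting point. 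Your binomial expansion over $S\subseteq[l]$ accounts for the omitted cross terms and shows they are harmless: each contributes $2^{-k}q^{l-|S|}(2^k-1)^{-|S|}L_S(\nu)^k$ with $L_S(\nu)=\nu(\{v: v|_S\ \mbox{constant}\})\ge 0$ linear in $\nu$ (and in fact constant on $M_1(\cX^l)$ when $|S|\le 1$), so every summand is convex and the total is convex for all $k,l\ge 1$. In short, you reach the same conclusion by the same mechanism, but your version supplies the full expansion that the paper's one-line reduction glosses over, which is a genuine gain in rigor; the paper's shortcut buys brevity by isolating only the single non-trivial (and dominant) convex term. One small point to make explicit when writing it up: convexity of $L_S(\nu)^k$ uses that $L_S\ge 0$ on the domain $M_1(\cX^l)$ together with convexity of $t\mapsto t^k$ on $[0,\infty)$ composed with an affine map, which is exactly how you argue.
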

\begin{lemma}\label{k-nae-sat-lemma}
For any $k\geq 1$, and for the CSP kernel corresponding to planted $k$-NAE-SAT, the operator $\Gamma_l$ is convex for any $l \geq 1$. 
\end{lemma}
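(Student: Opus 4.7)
The plan is to reduce $\Gamma_l$ to a manifestly convex expression by exploiting the very special structure of the NAE-SAT kernel. Setting $c = 1/(2^k-2)$, the NAE-SAT kernel satisfies
\begin{align*}
1 - Q(z|u) \;=\; (1-c) \;+\; c\bigl(\mathds{1}(z = u) + \mathds{1}(z = \bar u)\bigr),
\end{align*}
which is a non-negative combination of three rank-one indicators. This is the analogue of the decomposition used for planted $k$-SAT in Lemma \ref{k-sat-lemma}, with $\{u,\bar u\}$ replacing $\{\bar u\}$.

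First I would expand the product $\prod_{r=1}^l(1-Q(z|u^{(r)}))$ by distributing the three-term sum, producing a sum indexed by subsets $S\subseteq[l]$ and sign assignments $\sigma\in\{-1,+1\}^S$ (where $\sigma_r=+1$ selects $u^{(r)}$ and $\sigma_r=-1$ selects $\bar u^{(r)}$), with weights $(1-c)^{l-|S|} c^{|S|}$. Each resulting term is a product $\prod_{r\in S}\mathds{1}(z=\sigma_r u^{(r)})$. Summing over $z\in\cY$ collapses this to the indicator that all vectors $\sigma_r u^{(r)}$, $r\in S$, coincide (for $S=\emptyset$ it gives the constant $|\cY|$). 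The crucial observation is that this coincidence factorizes over the $k$ coordinates: it equals $\prod_{i=1}^k \mathds{1}\bigl((\sigma_r u^{(r)}_i)_{r\in S}\text{ all equal}\bigr)$, and the $i$-th factor depends only on $(u^{(r)}_i)_{r\in S}$.

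After substituting this factorized form back into the definition of $\Gamma_l$, the outer sum over $u^{(1)},\dots,u^{(l)}\in\cX^k$ paired with $\prod_{i=1}^k \nu(u^{(1)}_i,\dots,u^{(l)}_i)$ decouples into $k$ identical inner sums over $\cX^l$. Defining, for each pair $(S,\sigma)$,
\begin{align*}
\pi_{S,\sigma}(\nu) \;=\; \sum_{v\in\cX^l} \mathds{1}\bigl(\sigma_r v_r \text{ all equal for } r\in S\bigr)\, \nu(v),
\end{align*}
one obtains
\begin{align*}
\Gamma_l(\nu) \;=\; (1-c)^l \;+\; \frac{1}{|\cY|}\sum_{\emptyset\neq S\subseteq[l]}\sum_{\sigma\in\{\pm1\}^S} (1-c)^{l-|S|} c^{|S|}\, \pi_{S,\sigma}(\nu)^k.
\end{align*}
Each $\pi_{S,\sigma}$ is a non-negative linear functional of $\nu$, and (for $k\ge 2$) the coefficients $(1-c)^{l-|S|}c^{|S|}/|\cY|$ are non-negative. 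Since the $k$-th power of a non-negative linear functional is convex and non-negative linear combinations of convex functions are convex, $\Gamma_l$ is convex.

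The only delicate step is verifying the coordinate factorization of the coincidence indicator cleanly enough that the outer $k$-fold tensor sum decouples into $k$ identical inner sums, yielding a pure $k$-th power; this is what makes convexity transparent. Everything else is bookkeeping: expanding a product, swapping sums, and reading off positivity of the coefficients $(1-c), c\in(0,1)$ for $k\ge 2$. The argument is a direct adaptation of the $k$-SAT proof, with sign assignments $\sigma$ appearing to account for the two-element set $\{u,\bar u\}$.
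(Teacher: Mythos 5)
Your proof is correct and takes essentially the same route as the paper's: parametrize the coincidence of the replicas up to a global flip, factorize the resulting indicator over the $k$ coordinates, and exhibit $\Gamma_l$ as a nonnegative combination of $k$-th powers of nonnegative linear functionals of $\nu$ (your $\pi_{S,\sigma}$ with $S=[l]$ are exactly the paper's functionals $\nu\mapsto\sum_{u_1\in\cX}\nu(u_1\oplus b_1,\dots,u_1\oplus b_l)$). If anything, your full expansion of $\prod_{r=1}^l\bigl[(1-c)+c(\mathds{1}(z=u^{(r)})+\mathds{1}(z=\bar u^{(r)}))\bigr]$ over subsets $S\subseteq[l]$ is more careful than the paper's displayed computation, which keeps only the $S=[l]$ term (it effectively writes $1-Q(z|u)$ as $\tfrac{1}{2^k-2}\,\mathds{1}(u\in\{z,\bar z\})$); the extra terms you retain for $S\subsetneq[l]$ are convex by the same mechanism, so the conclusion is unchanged.
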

\begin{lemma}\label{k-xor-sat-lemma}
For any $k$ even, and for the CSP kernel corresponding to planted $k$-XOR-SAT, the operator $\Gamma_l$ is convex for any $l \geq 1$. 
\end{lemma}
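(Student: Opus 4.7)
The plan is to substitute the $k$-XOR-SAT kernel $Q(z|u)=\mathds{1}(z=\oplus_{i=1}^k u_i)$ into the definition of $\Gamma_l$ and reduce the analysis to a Fourier expansion on $\{0,1\}^l$. Writing $s_r=\oplus_{i=1}^k u_i^{(r)}\in\{0,1\}$, the inner sum over $z\in\cY=\{0,1\}$ in \eqref{gamma} is
\begin{align*}
\sum_{z\in\{0,1\}}\prod_{r=1}^l(1-Q(z|u^{(r)}))=\mathds{1}(s_1=\cdots=s_l=0)+\mathds{1}(s_1=\cdots=s_l=1).
\end{align*}
I would then repackage coordinates: regard each $l$-tuple $(u_i^{(1)},\dots,u_i^{(l)})\in\{0,1\}^l$ as a single vector $\vec v_i$, and view $\nu$ as a probability measure on $\{0,1\}^l$. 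The condition that all $s_r$ coincide is equivalent to $\bigoplus_{i=1}^k\vec v_i\in\{\vec 0,\vec 1\}$ (coordinate-wise XOR), so
\begin{align*}
\Gamma_l(\nu)=\tfrac{1}{2}\sum_{\vec s\in\{\vec 0,\vec 1\}}\;\sum_{\vec v_1,\dots,\vec v_k\in\{0,1\}^l}\mathds{1}\!\Bigl(\bigoplus_{i=1}^k\vec v_i=\vec s\Bigr)\prod_{i=1}^k\nu(\vec v_i).
\end{align*}

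Next I would apply the Fourier transform on $\mathbb{F}_2^l$, defined by $\hat\nu(\vec w)=\sum_{\vec v\in\{0,1\}^l}\nu(\vec v)(-1)^{\vec w\cdot\vec v}$. The standard XOR-convolution identity yields, for each $\vec s\in\{0,1\}^l$,
\begin{align*}
\sum_{\vec v_1,\dots,\vec v_k:\,\oplus_i\vec v_i=\vec s}\prod_{i=1}^k\nu(\vec v_i)=2^{-l}\sum_{\vec w\in\{0,1\}^l}(-1)^{\vec w\cdot\vec s}\,\hat\nu(\vec w)^k.
\end{align*}
Summing over the two choices $\vec s\in\{\vec 0,\vec 1\}$ introduces a factor $1+(-1)^{|\vec w|}$, which annihilates frequencies $\vec w$ of odd Hamming weight. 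This leaves the clean expression
\begin{align*}
\Gamma_l(\nu)=2^{-l}\sum_{\vec w\in\{0,1\}^l:\,|\vec w|\text{ even}}\hat\nu(\vec w)^k.
\end{align*}

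Finally, each $\hat\nu(\vec w)$ is a real-valued linear functional of $\nu\in M_1(\cX^l)$. For $k$ even, the function $x\mapsto x^k$ is convex on $\mR$, so $\nu\mapsto\hat\nu(\vec w)^k$ is the composition of a linear map with a convex function, hence convex; a nonnegative linear combination of convex functions is convex, and the convexity of $\Gamma_l$ follows. The parity hypothesis is used exactly once, in identifying $x^k$ as convex: for odd $k$, $x^k$ fails to be convex on $(-\infty,0)$ while $\hat\nu(\vec w)$ may take negative values, so the argument genuinely breaks down. The only technical step of substance is the $\mathbb{F}_2^l$-Fourier diagonalization of the XOR convolution, which cleanly isolates the dependence on the parity of $k$ and reduces the claim to a one-line convexity observation; I do not expect any serious obstacle beyond checking that the two terms $\vec s\in\{\vec 0,\vec 1\}$ combine to suppress odd-weight frequencies rather than even-weight ones.
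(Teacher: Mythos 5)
Your proof is correct and follows essentially the same route as the paper: the paper proves the general Lemma~\ref{fourier} (Fourier--Walsh diagonalization of kernels of the form $W(y|\oplus_i x_i)$, with only even-weight frequencies surviving and convexity of $x\mapsto x^k$ for even $k$) and obtains the XOR-SAT case by specializing to $s=1$, $d=-1$, which is exactly the computation you carry out directly. Your direct derivation is a self-contained instance of that argument (your two-term indicator is precisely the paper's $\gamma(v)$ at $a=0$, $b=1$), differing only by an irrelevant positive normalization constant.
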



Using Theorem \ref{conc} and previous lemmas, the following is obtained.  
\begin{corollary}
For random planted $k$-SAT, $k$-NAE-SAT, and $k$-XOR-SAT ($k$ even), the normalized logarithm of the number of solutions concentrates in probability.  
\end{corollary}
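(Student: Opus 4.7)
The plan is to combine Theorem~\ref{conc} with the log-counting identity of Lemma~\ref{counting-lemma}. First I would invoke Lemmas~\ref{k-sat-lemma}, \ref{k-nae-sat-lemma}, and \ref{k-xor-sat-lemma} to verify that the CSP kernels corresponding to $k$-SAT, $k$-NAE-SAT, and $k$-XOR-SAT (with $k$ even) each satisfy Hypothesis~H. Theorem~\ref{conc} then yields $\frac{1}{n}H_G^{(n)}(X|Y)\to C_k(\alpha,Q)$ almost surely. For a CSP kernel, Lemma~\ref{counting-lemma} gives $H_g(X|Y=y)=\log Z_g(y)$ on the support of $S_g$, so $H_G^{(n)}(X|Y)=\E_{Y|G}[\log Z_G(Y)]$; hence $\frac{1}{n}\E_{Y|G}[\log Z_G(Y)]$ converges almost surely to $C_k(\alpha,Q)$.

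To pass from convergence of this conditional mean to convergence in probability of the random variable $\frac{1}{n}\log Z_G(Y)$ itself, I would use the decomposition
\[
\tfrac{1}{n}\log Z_G(Y)-C_k(\alpha,Q)=\tfrac{1}{n}\bigl(\log Z_G(Y)-\E_{Y|G}[\log Z_G(Y)]\bigr)+\bigl(\tfrac{1}{n}\E_{Y|G}[\log Z_G(Y)]-C_k(\alpha,Q)\bigr).
\]
The second summand tends to zero almost surely by the previous step. For the first, I would build the Doob martingale, conditional on $G$, that reveals the outputs $Y_I$ one edge at a time, apply an Azuma--McDiarmid-type bound, and combine it with Poisson concentration of $|E(G)|$ around $\alpha n$. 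The expected per-edge increment of $\log Z$ equals the per-edge mutual information, which is $O(1)$, so one expects $O(\sqrt{n})$ fluctuations and hence $o(1)$ after normalization by $n$.

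The main obstacle lies in this concentration step: resampling a single $Y_I$ can in principle change $\log Z_G(Y)$ by up to $\Theta(n)$, so bounded differences cannot be invoked naively. I would handle this with a truncation argument, isolating the rare event that some edge produces an atypical increment, controlling its probability via Markov combined with the per-edge information bound, and running Azuma on the truncated martingale. Alternatively, one may exploit the identity $\log Z_G(Y)=n\log|\cX|+|E(G)|\log|A|+\log S_G(Y)$ (valid for CSP kernels, where $|A|$ denotes the common value of $|A(u)|$) to reduce concentration of $\log Z_G(Y)$ to concentration of $\log S_G(Y)$, where the Nishimori symmetry of the planted posterior can be exploited.
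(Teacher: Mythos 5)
Your first paragraph is precisely the paper's own derivation: Lemmas \ref{k-sat-lemma}, \ref{k-nae-sat-lemma} and \ref{k-xor-sat-lemma} verify Hypothesis~H for the three kernels, Theorem \ref{conc} then gives $\tfrac{1}{n}H_G^{(n)}(X|Y)\to C_k(\alpha,Q)$ almost surely, and Lemma \ref{counting-lemma} identifies $H_G^{(n)}(X|Y)$ with $\E_{Y|G}\log Z_G(Y)$. That is all the paper does to obtain the corollary, so on this core your proposal and the paper coincide.

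Where you go further is in distinguishing the conditional mean $\E_{Y|G}\log Z_G(Y)$ from the realized random variable $\log Z_G(Y)$ and trying to control the fluctuation of the latter over $Y$ given $G$. You are right that this is a genuine distinction: the Azuma argument in the paper's proof of Theorem \ref{conc} exposes only the hyperedges of $G$ and operates on the $Y$-averaged entropy, so it says nothing about fluctuations in $Y$; the paper leaves that step implicit. However, your proposed completion is not yet a proof. The obstacle you name is real (resampling a single $Y_I$ can shift $\log Z_G(Y)$ by order $n$), but the remedy as sketched does not work: the fact that the \emph{average} per-edge increment equals an $O(1)$ mutual information controls means, not the tails of the martingale increments, so ``Markov plus the per-edge information bound'' does not by itself justify the truncation needed before applying Azuma. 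The alternative route through $\log S_G(Y)$ merely renames the problem, since given $G$ the identity $\log Z_G(Y)=n\log|\X|+|E(G)|\log|A|+\log S_G(Y)$ makes the two quantities differ by a constant, and the appeal to Nishimori symmetry is left entirely unspecified. In short: for the corollary as the paper actually proves it, your first paragraph suffices and matches the paper; the stronger concentration-over-$Y$ statement you aim at is not established by your sketch (nor, explicitly, by the paper).
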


\subsection{Stochastic block model}\label{sbm-sec}
The problem of community clustering is to divide a set of vertices in a network (graph) into groups having a higher  connectivity within the groups and lower connectivity across the groups (assortative case), or the other way around (disassortative case). This is a fundamental problem in many modern statistics, machine learning, and data mining problems with a broad range of applications in population genetics, image processing, biology and social science. 
A large variety of models have been proposed for community detection problems, we refer to \cite{newman1, fortunato,goldenberg2010survey} for a survey on the subject. 

At an algorithmic level, the problem of finding the smallest cut in a graph with two equally sized groups, i.e., the min-bisection problem, is well-known to be NP-hard \cite{dyer}. Concerning average-case complexity, various random graphs models have been proposed for community clustering.  
The Erd\"os-R\'enyi random graph is typically a very bad model for community structures, since each node is equally connected to any other nodes and no communities are typically formed. 
The stochastic block model is a natural extension of an Erd\"os-R\'enyi model with a community structure. Although 
the model is fairly simple (communities emerge but the average degree is still constant\footnote{Models with corrected degrees have been proposed in \cite{newman2}}), it is a fascinating model with several fundamental questions open. 

We now describe the stochastic block model (SBM), also called planted bisection model, with two groups and symmetric parameters. 
Let $V=[n]$ be the vertex set and $a,b$ be two positive real numbers.  
For a uniformly drawn assignment $X \in \{0,1\}^V$ on the vertices, an edge is drawn between vertex $i$ and $j$ with probability $a/n$ if $X_i=X_j$ and with probability $b/n$ if $X_i\neq X_j$, and each edge is drawn independently. We denote this model by $\mathcal{G}(n,a,b)$. 
Note that the average degree of an edge is $(a+b)/2$, however, a $0$-labelled node is connected in expectation with $a/2$ $0$-labeled nodes and with $b/2$ $1$-labeled nodes.

This type of model was introduced in \cite{dyer}, in the dense regime. The attention to the sparse regime described above is more recent, with \cite{coja-sbm} and \cite{newman2,decelle,mossel-sbm}. In particular, \cite{decelle} conjectured a phase transition phenomenon, with the detection of clusters\footnote{Obtaining a reconstruction positively correlated with the true assignment} being possible if $(a-b)^2 > 2(a+b)$ and impossible otherwise. 
In \cite{mossel-sbm}, a remarkable proof of the impossibility part is obtained, leaving the achievability part open. 

We next define a parametrized kernel which will allow us to approximate the above SBM model with a graphical channel. 
\begin{definition}
An SBM kernel is given by
\begin{align}
Q(z|u_1,u_2) =  \label{sbm-kernel}
\begin{cases}
a/\gamma & \text{ if } u_1 = u_2,\\
b/\gamma & \text{ if } u_1 \neq u_2,
\end{cases}
\end{align}
where $u_1,u_2,z \in \{0,1\}$. 
\end{definition}

\begin{lemma}\label{andrea-lemma}
There exists $n_0= n_0(\gamma,a,b)$ and $C= C(a,b)$ such that the
following holds true. 
Let $X$ be uniformly drawn on $\{0,1\}^V$, $Y$ be the output (the graph) of a
sparse stochastic block model of parameters $a,b$, and $Y_{\gamma}$ be
the output of a graphical channel with  graph ensemble
$\mathcal{P}(\gamma,n)$ and kernel \eqref{sbm-kernel}, then, for all
$n\ge n_0$
\begin{align}
\big|H^{(n)}(X|Y)-H^{(n)}(X|Y_{\gamma})\big|\le \frac{Cn}{\gamma} .
\end{align}
\end{lemma}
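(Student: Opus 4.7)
The plan is to show that the extra noise built into the graphical channel for the SBM kernel (each pair carries many edges, each with tiny signal of order $1/\gamma$) is asymptotically equivalent, as $\gamma\to\infty$, to a Poisson version of the SBM where each pair is observed through a rate-$\Theta(1/n)$ channel. Since $X$ is uniform on $\{0,1\}^n$, the identity $H^{(n)}(X|Y)=n\log 2-I(X;Y)$ (and similarly for $Y_\gamma$) reduces the claim to bounding $|I(X;Y_\gamma)-I(X;Y)|$. For the graphical channel, Poisson thinning decomposes the per-pair observation into two independent counts $N^{(0)}_{ij}$ and $N^{(1)}_{ij}$ of $0$- and $1$-labeled edges; conditionally on $X$ these are independent Poissons across pairs, with means $p(1-a/\gamma),p(1-b/\gamma)$ and $pa/\gamma,pb/\gamma$ respectively, where $p=\gamma n/\binom{n}{2}=\Theta(\gamma/n)$, and together $(N^{(0)}_{ij},N^{(1)}_{ij})_{i<j}$ is a sufficient statistic for $X$ from $Y_\gamma$.

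The first step is to show that $(N^{(0)}_{ij})_{i<j}$ carries at most $O(n/\gamma)$ bits of information about $X$. A second-order expansion of the Kullback--Leibler divergence between close Poissons,
\[
D\bigl(\mathrm{Poi}(\lambda_0)\,\|\,\mathrm{Poi}(\lambda_1)\bigr)=\frac{(\lambda_0-\lambda_1)^2}{2\lambda_1}+O\!\left(\frac{(\lambda_0-\lambda_1)^3}{\lambda_1^2}\right),
\]
evaluated at $\lambda_0-\lambda_1=p(b-a)/\gamma$ with $\lambda_1\sim p\sim\gamma/n$, gives a per-pair KL of order $(a-b)^2/(n\gamma)$. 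Conditional independence of the $N^{(0)}_{ij}$ given $X$ together with the subadditivity $H((N^{(0)}_{ij}))\le\sum_{ij}H(N^{(0)}_{ij})$ tensorizes the mutual information into $\sum_{ij}I(X;N^{(0)}_{ij})$, which upon summation over $\binom{n}{2}$ pairs yields the desired $O(n/\gamma)$ bound.

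The second step is to compare the thinned statistic $(N^{(1)}_{ij})_{i<j}$ with the SBM edge vector $Y=(Y_{ij})_{i<j}$. Both are vectors of independent per-pair observations with small rates of order $1/n$; a per-pair coupling between $\mathrm{Poi}(\lambda)$ and $\mathrm{Bern}(\lambda)$ agrees except with probability $O(\lambda^2)=O(1/n^2)$, and a direct pairwise comparison of the posteriors of $(X_i,X_j)$ given $N^{(1)}_{ij}$ versus given $Y_{ij}$ shows that the difference in per-pair mutual information is of lower order than $1/(n\gamma)$ for $\gamma\le n$ (and the bound $Cn/\gamma$ is trivial when $\gamma>n$ since both conditional entropies are at most $n\log 2$). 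Combining the two steps yields $|I(X;Y_\gamma)-I(X;Y)|\le C(a,b)\,n/\gamma$ for all $n\ge n_0(\gamma,a,b)$, with $n_0$ arising from the Taylor-remainder requirement that $\gamma$ and $n$ be large relative to $a,b$.

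The hard part is the second step: translating the per-pair total-variation closeness between Poisson and Bernoulli single-edge observations into a joint mutual-information bound that is small enough to be absorbed into the $O(n/\gamma)$ leakage from the first step, without incurring a spurious $\log n$ factor from a naive Pinsker- or Fano-style argument. This is where a careful joint coupling that places $Y$ and the thinned graphical channel on the same probability space, and a direct pairwise comparison of the conditional entropies, is needed in place of a global total-variation estimate.
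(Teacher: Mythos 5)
Your first step is sound and is, in spirit, the paper's central estimate: since the $N^{(0)}_{ij}$ are conditionally independent given $X$, one has $I(X;N^{(0)}\mid N^{(1)})\le I(X;N^{(0)})\le\sum_{ij}I(X_i X_j;N^{(0)}_{ij})$ (subadditivity for the unconditional entropy, exact additivity for the entropy given $X$), and the per-pair Poisson comparison indeed gives order $(a-b)^2/(n\gamma)$ per pair, hence $O(n/\gamma)$ in total. The genuine gap is in your second step. A ``direct pairwise comparison of the posteriors of $(X_i,X_j)$ given $N^{(1)}_{ij}$ versus given $Y_{ij}$'' controls only per-pair mutual informations, and the joint quantity $\big|I\big(X;(N^{(1)}_{ij})_{ij}\big)-I\big(X;(Y_{ij})_{ij}\big)\big|$ is not a sum of per-pair terms: $H(\cdot\mid X)$ tensorizes, but $H(Y)$ and $H(N^{(1)})$ do not, and the slack in subadditivity is precisely the $\Theta(n)$ collective information that the lemma is about. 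No mechanism is supplied for converting per-pair closeness (total variation $O(1/n^2)$, or per-pair MI differences) into a bound on the difference of the \emph{joint} conditional entropies; you yourself flag this conversion as ``the hard part'' and describe what would be ``needed'' rather than proving it, so the proof is incomplete exactly where the difficulty sits. Also, the parenthetical that the case $\gamma>n$ is ``trivial since both conditional entropies are at most $n\log 2$'' is incorrect ($n\log 2$ is much larger than $Cn/\gamma$ there); it is harmless only because $n_0$ may depend on $\gamma$, so you can simply restrict to $n\ge\gamma$.

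For comparison, the paper closes precisely this step with a degradation device rather than a per-pair posterior comparison. It introduces $Z_{ij}\in\{0,1,\ast\}$ (edge present, pair tested but absent, pair untested) so that the SBM graph $Y$ is a \emph{deterministic function} of $Z$; then $X-Z-Y$ is a Markov chain, the direction $H(X|Z)\le H(X|Y)$ is free, and the other direction $H(X|Y)-H(X|Z)=H(Z|Y)-H(Z|X,Y)$ does decompose over pairs (subadditivity for $H(Z|Y)$, exact additivity of $H(Z|X,Y)$ by conditional independence given $X$), yielding per-pair residual terms $H(Z_{ij}|Y_{ij})-H(Z_{ij}|X_i,X_j,Y_{ij})$ of order $1/(n\gamma)$, hence $n(a-b)^2/(16\gamma)$ in total. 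The Poisson-versus-Bernoulli discrepancy is then handled by a separate coupling that matches the number of kernel outputs per pair up to expected difference $O(1/n^2)$ and uses the fact that adding one output changes $H(X|\cdot)$ by at most one bit, giving an additive error $C(\gamma)$ that is absorbed for $n\ge n_0(\gamma)$. To repair your route you need an analogous device: either an exact (or almost exact) degradation from the thinned $1$-labelled multigraph to the SBM graph together with a decomposable bound on the residual conditional information $I(X;N^{(1)}\mid Y)$, or an expected-edge-count coupling of this type; note also that with $p=\gamma n/\binom{n}{2}$ and kernel value $a/\gamma$ the $1$-labelled rate per pair is about $2a/n$ rather than $a/n$, so the calibration between the graphical channel and the SBM must be fixed before any such comparison can be made at leading order.
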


\begin{lemma}\label{annoying-lemma}
For the SBM kernel given by \eqref{sbm-kernel}, $a \leq b$ (disassortative case) and $\gamma$ large enough, the operator $\Gamma_l$ is convex for any $l \geq 1$. 
\end{lemma}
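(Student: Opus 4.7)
The key structural observation is that the SBM kernel has $k=2$, so $\Gamma_l(\nu)$ is a \emph{homogeneous polynomial of degree $2$} in $\nu$ and convexity reduces to positive semi-definiteness of a matrix. Interpreting the kernel \eqref{sbm-kernel} as the binary channel $\cY=\{0,1\}$, $Q(1|u_1,u_2)=p(u_1,u_2)$, $Q(0|u_1,u_2)=1-p(u_1,u_2)$, with $p(u_1,u_2)=a/\gamma$ if $u_1=u_2$ and $b/\gamma$ otherwise, one finds
\[
\sum_{z\in\{0,1\}}\prod_{r=1}^{l}\bigl(1-Q(z|u^{(r)})\bigr)=\prod_{r}p(u^{(r)})+\prod_{r}\bigl(1-p(u^{(r)})\bigr).
\]
Writing each $u^{(r)}=(v_r,w_r)$ and collecting by the ``column'' vectors $v,w\in\{0,1\}^{l}$ that are actually fed to $\nu$, the definition of $\Gamma_l$ becomes
\[
\Gamma_l(\nu)=\tfrac{1}{2}\sum_{v,w\in\{0,1\}^{l}}K_{v,w}\,\nu(v)\nu(w),\quad K_{v,w}:=\prod_{r=1}^{l}p(v_r,w_r)+\prod_{r=1}^{l}\bigl(1-p(v_r,w_r)\bigr),
\]
a symmetric quadratic form in $\nu$. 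Convexity of $\Gamma_l$ is therefore equivalent to the matrix $K=(K_{v,w})$ being positive semi-definite.

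Next I would Fourier-expand $K$ on the hypercube. Setting $\chi_r:=(-1)^{v_r+w_r}$, $\mu:=(a+b)/(2\gamma)$, and $\delta:=(a-b)/(2\gamma)$, one has $p(v_r,w_r)=\mu+\delta\chi_r$ and $1-p(v_r,w_r)=(1-\mu)-\delta\chi_r$. Multiplying out both products and grouping by monomials in the $\chi_r$'s yields
\[
K_{v,w}=\sum_{S\subseteq[l]}c_S\prod_{r\in S}\chi_r,\qquad c_S=\delta^{|S|}\!\left[\mu^{\,l-|S|}+(-1)^{|S|}(1-\mu)^{l-|S|}\right].
\]
Because $\prod_{r\in S}\chi_r=\psi_S(v)\psi_S(w)$ with $\psi_S(v):=(-1)^{\sum_{r\in S}v_r}$, this exhibits $K=\sum_{S}c_S\,\psi_S\psi_S^{\top}$ as a sum of rank-one symmetric matrices indexed by $v\in\{0,1\}^l$. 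Consequently $K\succeq 0$ as soon as every Fourier coefficient $c_S$ is non-negative.

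The crucial verification is the sign of each $c_S$. If $|S|$ is even, $\delta^{|S|}\geq 0$ and the bracket $\mu^{\,l-|S|}+(1-\mu)^{l-|S|}$ is obviously non-negative, so $c_S\geq 0$ unconditionally. The delicate case is $|S|$ odd, where the bracket equals $\mu^{\,l-|S|}-(1-\mu)^{l-|S|}$; here I would invoke the two hypotheses of the lemma in tandem. The disassortative assumption $a\leq b$ makes $\delta\leq 0$, so $\delta^{|S|}\leq 0$ for odd $|S|$; and taking $\gamma$ large enough (for instance $\gamma\geq a+b$) forces $\mu\leq 1/2$, which makes $\mu^{\,l-|S|}-(1-\mu)^{l-|S|}\leq 0$ (and vanishing when $|S|=l$). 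The product of two non-positive numbers is non-negative, so $c_S\geq 0$ in this case as well. Combining the two cases gives $K\succeq 0$, hence $\Gamma_l$ is convex as required.

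The main conceptual point---and the reason both hypotheses are genuinely needed---lies precisely in the odd-$|S|$ coefficients: dropping the disassortative assumption flips the sign of $\delta^{|S|}$ and yields strictly negative Fourier coefficients no matter how large $\gamma$ is taken, so a naive perturbative expansion in $1/\gamma$ around the trivial $l=1$ case cannot succeed. The ``odd-character'' obstructions must be absorbed at the structural level through the sign of $\delta$, and the sum-of-squares representation $\Gamma_l(\nu)=\frac{1}{2}\sum_{S}c_S\bigl(\sum_{v}\psi_S(v)\nu(v)\bigr)^{2}$ is the cleanest way to display this.
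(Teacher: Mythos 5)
Your proof is correct and follows essentially the same route as the paper: your coefficients $c_S=\delta^{|S|}\bigl[\mu^{\,l-|S|}+(-1)^{|S|}(1-\mu)^{l-|S|}\bigr]$ are, up to the normalization $2^{-l}$, exactly the Fourier--Walsh coefficients in the paper's Lemma \ref{fourier} with $s=2\mu$, $d=2\delta$, and the sign analysis using $a\le b$ together with $\gamma$ large (so $\mu\le 1/2$) is the same. The only difference is packaging: the paper derives the SBM case from the general Fourier lemma for XOR-type kernels with arbitrary even $k$, whereas you specialize directly to $k=2$ and phrase convexity as positive semi-definiteness of the quadratic form, which is a sufficient (and here entirely adequate) criterion.
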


\begin{corollary}
For the disassortative SBM, the limit of $H(X^{(n)}|Y)/n$ exists and satisfies
\begin{align}
\lim_{n\to\infty}\frac{1}{n}H^{(n)}(X|Y) =
\lim_{\gamma\to\infty}\lim_{n\to\infty}\frac{1}{n}
H^{(n)}(X|Y_{\gamma})\, .
\end{align}
\end{corollary}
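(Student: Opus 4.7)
The plan is to combine the quantitative coupling between the true sparse SBM and its Poissonized graphical-channel analogue (Lemma \ref{andrea-lemma}) with the existence of the per-variable entropy limit for the graphical channel (Corollary \ref{conv}, which applies thanks to Lemma \ref{annoying-lemma} ensuring Hypothesis H for large $\gamma$). Write $h_n := \frac{1}{n}H^{(n)}(X|Y)$ and $h_{n,\gamma} := \frac{1}{n}H^{(n)}(X|Y_\gamma)$.

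First, fix any $\gamma$ large enough that Lemma \ref{annoying-lemma} guarantees convexity of $\Gamma_l$ for every $l \geq 1$; then the SBM kernel \eqref{sbm-kernel} satisfies Hypothesis H, so Corollary \ref{conv} (applied with $k = 2$) produces a well-defined limit
\begin{align*}
C(\gamma) \;:=\; \lim_{n\to\infty} h_{n,\gamma}\, .
\end{align*}
Next, Lemma \ref{andrea-lemma} supplies, for all $n \geq n_0(\gamma,a,b)$, the deterministic bound $|h_n - h_{n,\gamma}| \leq C(a,b)/\gamma$.

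Letting $n \to \infty$ in this inequality and invoking the existence of $C(\gamma)$ yields
\begin{align*}
C(\gamma) - \frac{C(a,b)}{\gamma} \;\leq\; \liminf_{n\to\infty} h_n \;\leq\; \limsup_{n\to\infty} h_n \;\leq\; C(\gamma) + \frac{C(a,b)}{\gamma}\, .
\end{align*}
Since this holds for all sufficiently large $\gamma$, the gap between $\limsup$ and $\liminf$ is at most $2C(a,b)/\gamma$, which can be made arbitrarily small. Hence $\lim_{n\to\infty} h_n$ exists; call it $C^*$. The same inequality then gives $|C^* - C(\gamma)| \leq C(a,b)/\gamma$, so $C(\gamma) \to C^*$ as $\gamma \to \infty$, which is precisely the claimed identity.

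The argument is essentially a diagonalization / triangle-inequality assembly, so no new estimate is required at this level. The only genuine subtlety is the order of limits: one must first send $n \to \infty$ at fixed large $\gamma$ (for which existence of the $n$-limit is supplied by Corollary \ref{conv} via Hypothesis H from Lemma \ref{annoying-lemma}) and only afterwards let $\gamma \to \infty$; inverting the order would be circular because the SBM itself is not directly known to satisfy Hypothesis H. All the substantive work, both the Poissonization with $O(n/\gamma)$ cost and the convexity verification of $\Gamma_l$ in the regime $a \leq b$ and large $\gamma$, is encapsulated in the two preceding lemmas.
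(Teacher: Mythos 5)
Your proposal is correct and is essentially the argument the paper intends: fix $\gamma$ large so that Lemma \ref{annoying-lemma} gives Hypothesis H, apply Corollary \ref{conv} to get the $n$-limit of $H^{(n)}(X|Y_\gamma)/n$, and then use the $Cn/\gamma$ bound of Lemma \ref{andrea-lemma} to sandwich $\limsup$ and $\liminf$ of $H^{(n)}(X|Y)/n$ within $2C(a,b)/\gamma$ of each other, which forces existence of the limit and the identification with $\lim_{\gamma\to\infty}\lim_{n\to\infty}H^{(n)}(X|Y_\gamma)/n$. Your remark on the order of limits is also the right one; no gap.
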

In a work in progress, the assortative case is investigated with a different proof technique. 
The computation of the above limit is also expected to reflect a phase transition for the SBM \cite{decelle,mossel-sbm}. 

\subsection{Parity-check encoded channels}\label{coding-sec}
The Shannon celebrated coding theorem states that for a discrete memoryless channel $W$ from $\X$ to $\Y$, the largest rate at which reliable communication can take place is given by the capacity $C(W)= \max_{X} I(X;Y)$, where $I(X;Y)$ is the mutual information of the channel $W$ with a random input $X$.  
To show that rates up to capacity are achievable, Shannon used random code books, relying on a probabilistic argument. 
Shortly after, Elias \cite{elias} showed that random linear codes allow to achieve capacity, reducing the encoding complexity from exponential to quadratic in the code dimension. However, Berlekamp, McEliece, and Van Tilborg showed in \cite{coding-np-hard} that the maximum likelihood decoding of unstructured linear codes is NP-complete. 

In order to reduce the complexity of the decoder, Gallager proposed the use sparse linear codes \cite{GallagerThesis}, giving birth to the LDPC codes, with sparse parity-check matrices, and LDGM codes, with sparse generator matrices. Various types of LDPC/LDGM codes depend on various types of row and column degree distributions. Perhaps one of the most basic class of such codes is the LDGM code with constant right degree, which corresponds to a generator matrix with column having a fixed number $k$ of one's. This means that each codeword is the XOR of $k$ uniformly selected information bits. In other words, this is a graph based code drawn from an Erd\"os-R\'enyi or Poisson ensemble $\mathcal{P}_k(\alpha,n)$. The dimension of the code is $m=\alpha n$ and the rate is $r=1/\alpha$. The code can also be seen as a planted $k$-XOR-SAT formula. 

Despite the long history of research on the LDPC and LDGM codes, and their success in practical applications of communications, there are still many open questions concerning the behaviour of these codes. In particular, even for the simple code described above, it is still open to show that the mutual information $\frac{1}{n}I(X^n;Y^m)$ concentrates, with the exception of the binary erasure channel for which much more is known \cite{Luby01,Luby97}. In the case of dense random codes, standard probability arguments show that concentration occurs with a transition at capacity for any discrete memoryless channels. But for sparse codes, the traditional arguments fail. Recently, the following was conjectured in \cite{kumar} for constant right degree LDGM codes and binary input symmetric output channels \footnote{This means that the channel is a weighted sum of binary symmetric channels}, 
\begin{align}
\pp\{ \frac{1}{m}I(X;Y) <  C(W)  \} \to  \label{shokro-conj}
\begin{cases}
0 & \text{ if } \alpha < C_k(W) \\
1 & \text{ if } \alpha > C_k(W) 
\end{cases}
\end{align}
where $C_k(W)$ is a constant depending on $k$ and $W$. 

We provide next a concentration result for this model, which implies the above conjecture for even degrees. 

\begin{definition}
An encoded symmetric kernel is given by 
\begin{align}
Q(z|u)= W(z | \oplus_{i=1}^k u_i),
\end{align}
where $W$ is a binary input symmetric output (BISO) channel from $\X$ to $\Y$.
\end{definition}
Note that this corresponds to the output of a BISO $W$ when the input to the channel is the XOR of $k$ information bits. 
This corresponds also to the constant right-degree LDGM codes considered in the conjecture of \cite{kumar}.


\begin{lemma}\label{bsc-lemma}
For an encoded symmetric kernel with $k$ even, the operator $\Gamma_l$ is convex for any $l \geq 1$. 
\end{lemma}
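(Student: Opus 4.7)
The plan is to pass to the Walsh--Hadamard transform on the hypercube $\{0,1\}^l$. First I would exploit that for the encoded symmetric kernel, $Q(z|u)=W(z|\oplus_{i=1}^k u_i)$ depends on $u$ only through its $\mathbb{F}_2$-parity, so the bracketed factor in~\eqref{gamma} depends on $(u^{(1)},\dots,u^{(l)})$ only through the length-$l$ vector of row parities $s:=(\oplus_i u_i^{(1)},\dots,\oplus_i u_i^{(l)})$. Regrouping the variables by their ``column tuples'' $t_i:=(u_i^{(1)},\dots,u_i^{(l)})\in\{0,1\}^l$, which are exactly what $\nu$ acts on, one has $s=t_1\oplus\cdots\oplus t_k$ (coordinatewise XOR); since the $t_i$ are i.i.d.\ $\nu$ under the product measure in \eqref{gamma},
\begin{align*}
\Gamma_l(\nu)\;=\;\frac{1}{|\cY|}\sum_{s\in\{0,1\}^l} f(s)\,\bigl(\nu^{*k}\bigr)(s),\qquad f(s):=\sum_{z\in\cY}\prod_{r=1}^l\bigl(1-W(z|s_r)\bigr),
\end{align*}
where $\nu^{*k}$ is the $k$-fold convolution of $\nu$ on the additive group $\mathbb{F}_2^l$.

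The second step is to diagonalize the convolution via the Walsh transform $\hat g(\omega)=\sum_{s}(-1)^{\omega\cdot s}g(s)$. Since $\widehat{\nu^{*k}}(\omega)=\hat\nu(\omega)^k$, Parseval yields
\begin{align*}
\Gamma_l(\nu)\;=\;\frac{1}{|\cY|\,2^l}\sum_{\omega\in\{0,1\}^l}\hat f(\omega)\,\hat\nu(\omega)^{k}.
\end{align*}
Now $\nu\mapsto\hat\nu(\omega)$ is linear in $\nu$, and since $k$ is even the map $t\mapsto t^k$ is convex on $\reals$; hence each $\nu\mapsto\hat\nu(\omega)^k$ is convex. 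It therefore suffices to prove $\hat f(\omega)\ge 0$ for every $\omega\in\{0,1\}^l$.

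For the last step I factor $\hat f(\omega)$ coordinate by coordinate:
\begin{align*}
\hat f(\omega) \;=\;\sum_{z\in\cY}\prod_{r=1}^{l}\bigl[(1-W(z|0))+(-1)^{\omega_r}(1-W(z|1))\bigr]\;=\;\sum_{z\in\cY}\bigl[2-W(z|0)-W(z|1)\bigr]^{l-|\omega|}\bigl[W(z|1)-W(z|0)\bigr]^{|\omega|}.
\end{align*}
Because $W$ is BISO, there is an involution $\sigma:\cY\to\cY$ with $W(\sigma(z)|0)=W(z|1)$ (and $W(z|0)=W(z|1)$ on the fixed-point set of $\sigma$). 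Pairing $z$ with $\sigma(z)$, the first bracket is invariant while the second is negated: for $|\omega|$ even the paired contribution is a manifestly nonnegative quantity, while for $|\omega|$ odd it cancels. Fixed points of $\sigma$ contribute $0$ unless $\omega=0$, in which case their contribution is $[2-2W(z|0)]^l\ge 0$. Hence $\hat f(\omega)\ge 0$ for all $\omega$, and $\Gamma_l$ is a nonnegative combination of convex functionals of $\nu$, proving convexity.

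The two hypotheses play complementary and essential roles: the evenness of $k$ makes $\hat\nu(\omega)^k$ convex in $\nu$ (Walsh coefficients of a probability measure can be negative), while the BISO property is precisely what forces the Fourier coefficients $\hat f(\omega)$ to be nonnegative via the involution argument. The method fails if either is dropped. The main technical point is the sign analysis of $\hat f(\omega)$, but it is short once the Fourier reformulation is set up; I would expect the bookkeeping of the column/row regrouping that produces the convolution structure to be the most error-prone step to write out carefully.
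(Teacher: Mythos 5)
Your proof is correct and follows essentially the same route as the paper's: reduce to the parity convolution $\nu^{\star k}$, pass to the Walsh transform so that $\Gamma_l(\nu)$ becomes a combination of the convex functionals $\hat\nu(\omega)^k$ ($k$ even), and verify nonnegativity of the Fourier coefficients of the kernel term using the BISO symmetry. The only cosmetic difference is that the paper encodes the BISO property as a decomposition of the channel matrix into symmetric column pairs plus constant columns (computing the Fourier coefficients via the identity \eqref{pair}), whereas you phrase the same cancellation/nonnegativity argument through the output involution $\sigma$; these are equivalent.
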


\begin{corollary}
Let $X$ be uniformly drawn in $GF(2)^n$, $U=XG$ be the output of a $k$-degree LDGM code of dimension $\alpha n$, and $Y$ be the output of $U$ on a BISO channel $W$. Then $\frac{1}{n}I(X;Y)$ converges in probability to a constant $C_k( \alpha, W)$. 
\end{corollary}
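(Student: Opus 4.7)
The strategy is to recognize the LDGM-plus-BISO setup as a specific instance of the graphical channel framework of Section 2, after which Theorem \ref{conc} does essentially all the work. First I would identify the random generator matrix with a random hypergraph: a $k$-degree generator matrix of dimension $\alpha n$ has $\alpha n$ columns, each with exactly $k$ ones, and each such column is in bijection with a $k$-subset $I\subseteq[n]$. Drawing these columns uniformly (or independently, as in the multi-edge Poisson ensemble) is, up to the usual equivalence, the same as drawing a hypergraph from $\mathcal{P}_k(\alpha,n)$. Under this identification, codeword coordinate $I$ equals $U_I=\bigoplus_{i\in I}X_i$; passing $U_I$ through the BISO channel $W$ produces $Y_I$ with law $Q(\cdot\mid X[I])$ for the encoded symmetric kernel $Q(z\mid u)=W(z\mid\bigoplus_{i=1}^k u_i)$. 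Therefore $(X,G,Y)$ has exactly the joint law of a graphical channel with graph $G\sim\mathcal{P}_k(\alpha,n)$ and kernel $Q$.

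Second, for $k$ even, Lemma \ref{bsc-lemma} furnishes Hypothesis H for this kernel, so Theorem \ref{conc} applies and yields, almost surely,
\begin{align}
\frac{1}{n}H_G^{(n)}(X\mid Y)\;\longrightarrow\; C_k(\alpha,Q).
\end{align}
Since $X$ is uniform on $\{0,1\}^n$ and independent of the random code $G$, one has $H(X)=H(X\mid G)=n$, so for every realization of $G$ the identity $I(X;Y\mid G)=n-H_G^{(n)}(X\mid Y)$ holds. Adopting the customary coding-theoretic convention that the receiver knows the code (which is the reading under which $\tfrac{1}{m}I(X;Y)$ in the conjecture \eqref{shokro-conj} makes sense as a random variable), the random variable $\tfrac{1}{n}I(X;Y)$ equals $1-\tfrac{1}{n}H_G^{(n)}(X\mid Y)$ and hence converges almost surely, and in particular in probability, to $C_k(\alpha,W):=1-C_k(\alpha,Q)$, as required.

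In short, essentially nothing new needs to be proved: the corollary is a direct bookkeeping consequence of Theorem \ref{conc} and Lemma \ref{bsc-lemma}. The only mildly nontrivial checks I would record are (i) that the LDGM ensemble with $\alpha n$ uniformly chosen columns and the Poisson multi-edge ensemble $\mathcal{P}_k(\alpha,n)$ yield the same limiting normalized entropy---routine, since the number of edges concentrates and the expected number of column collisions is $O(1)$, contributing $o(n)$ to $H_G^{(n)}(X\mid Y)$---and (ii) that the $\tfrac{1}{m}$ normalization in \eqref{shokro-conj} versus the $\tfrac{1}{n}$ normalization here merely rescales the limit by the explicit factor $\alpha=m/n$. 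The substantive difficulty lies upstream, in Lemma \ref{bsc-lemma} (convexity of $\Gamma_l$ for the XOR-composed kernel, where the hypothesis that $k$ is even enters essentially) and in the interpolation/subadditivity argument for Theorem \ref{add}; granting those, the corollary itself is immediate.
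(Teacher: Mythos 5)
Your proposal is correct and follows essentially the same route as the paper: the corollary is treated there as an immediate consequence of identifying the constant right-degree LDGM ensemble with the graphical channel $\mathcal{P}_k(\alpha,n)$ under the encoded symmetric kernel, invoking Lemma \ref{bsc-lemma} (for $k$ even) to secure Hypothesis H, applying Theorem \ref{conc}, and converting via $I(X;Y)=n-H_G^{(n)}(X|Y)$. Your added bookkeeping on ensemble equivalence and the $\tfrac{1}{m}$ versus $\tfrac{1}{n}$ normalization matches the paper's own remarks and introduces nothing divergent.
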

Note that $\frac{1}{m}I(X;Y) =  \frac{1}{m}H(Y)- H(W)$, where $H(W)$ denotes the conditional entropy of the channel $W$. 
Hence $\frac{1}{m}I(X;Y) < 1- H(W) \equiv \frac{1}{m}H(Y) < 1$. Since $\frac{1}{m}H(Y)$ converges from previous corollary, and since the limit must be decreasing in $\alpha$ (increasing in $r$), the conjecture \eqref{shokro-conj} follows.

\section{Proof outline for Theorem \ref{add}: Interpolation method for graphical channels}\label{interpolation-sec}
We now show the sub-additivity of $H^{(n)}(X|Y)$, 
\begin{align}
H^{(n)}(X|Y) \leq H^{(n_1)}(X|Y) + H^{(n_2)}(X|Y).
\end{align}
Note that if we partition the set of vertices $[n]$ into two disjoint sets of size $n_1$ and $n_2$ with $n_1+n_2=n$, and denote by $g_1$ and $g_2$ the restriction of $g$ onto these subsets obtained by {\bf removing} all the crossing hyperedges, then the following is obtained by basic properties of the entropy 
\begin{align}
H_g^{(n)}(X|Y) \leq H_{g_1}^{(n)}(X|Y) + H_{g_2}^{(n)}(X|Y). \label{fake}
\end{align}
Hence the above is true for a random graph $G$ drawn from the ensemble $\mathcal{P}_k(\alpha,n)$. However, the terms $H_{G_i}^{(n)}(X|Y)$, $i=1,2$, do not correspond to $H_{G_i}^{(n_i)}(X|Y)$, since the edge probability is $\frac{\alpha n}{\binom{n}{k}}$ and not $\frac{\alpha n_i}{\binom{n_i}{k}}$. Consequently, the above does not imply $H^{(n)}(X|Y) \leq H^{(n_1)}(X|Y) + H^{(n_2)}(X|Y)$. 
To obtain the proper term on the right hand side, one should add the edges lost in the splitting of the vertices (e.g., via a coupling argument), but this gives a lower bound on the right hand side of \eqref{fake}, conflicting with the upper bound. 
The interpolation method provides a way to compare the right quantities.

The interpolation method was first introduced in \cite{GuerraToninelliLimit}
for the Sherrington-Kirkpatrick model. This is a model for 
a spin-glass (i.e.\ a spin model with random couplings) on a complete
graph.
It was subsequently shown in 
\cite{FranzLeone,FranzLeoneToninelli,PanchenkoTalagrand}
that the same ideas can be generalized
to models on random sparse graphs, and applications in coding theory and random combinatorial optimization were proposed in \cite{Montanari05,Kudekar} and \cite{BayatiInterpolation,AMarxiv}. We next develop an interpolation method to estimate the conditional entropy of general graphical channels for different values of $n$. Interestingly, the planting flips the behaviour of the entropy from supper to sub-additive.

\begin{definition}
We define a more general Poisson model for the random graph, where a parameter $\e_I \geq 0$ is attached to each $I \in  E_k(V)$, and the number of edges $m_I(\e_I)$ is drawn from a Poisson distribution of parameter $\e_I$. This defines a random hypergraph whose edge probability is not homogenous but depends on the parameters $\e_I$. Denoting by $\underline{\e}$ the collection of all ${\binom{n}{k}}$ parameters $\e_I$, we denote this ensemble as $\mathcal{P}_k(\underline{\e},n)$. If for any $I$, $\e_I=\frac{\alpha n}{\binom{n}{k}}$, $\mathcal{P}_k(\underline{\e},n)$ reduces to $\mathcal{P}_k(\alpha,n)$ as previously defined.
\end{definition}

\begin{lemma}\label{derivative}
Let $X$ be uniformly drawn over $\X^n$, $G$ be a random hypergraph drawn from the ensemble $\mathcal{P}_k(\ueps,n)$  independently of $X$, and $Y(\ueps)$ be the output of $X$ through $P_G$ defined in \eqref{direct} for a kernel $Q$. Then
\begin{align}
\frac{\partial \phantom{\eps}}{\partial\eps_I} H^{(n)}(X|Y(\ueps)) =
-I(Y_I;X_I|Y(\ueps))\, ,
\end{align}
where $Y_I$ and $Y(\ueps)$ are independent conditionally on $X$ (i.e., $Y_I$ is drawn under $Q(\cdot|X[I])$ and $Y(\ueps)$ is drawn independently under $R_G(\cdot|X)$). 
 \end{lemma}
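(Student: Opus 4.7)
The plan is to exploit the Poisson structure of the graph ensemble via the elementary identity: if $M\sim\text{Poisson}(\lambda)$ then for any bounded $f$,
\begin{align*}
\frac{d}{d\lambda}\E[f(M)] = \E[f(M+1)] - \E[f(M)],
\end{align*}
obtained by termwise differentiation of $\sum_{m\ge 0} e^{-\lambda}\lambda^m f(m)/m!$ and reindexing. Fixing all $\eps_J$ with $J\ne I$, I would view $H^{(n)}(X|Y(\ueps))$ as a function of $\eps_I$ alone, namely
\begin{align*}
H^{(n)}(X|Y(\ueps)) = \sum_{m\ge 0}\frac{e^{-\eps_I}\eps_I^m}{m!}\,h(m),
\end{align*}
where $h(m)$ denotes the expected conditional entropy of $X$ given all outputs when $m_I$ is pinned at $m$ (every remaining $m_J$ still Poisson-distributed with parameter $\eps_J$, each output drawn i.i.d.\ from the kernel $Q$).

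Applying the Poisson identity, the derivative equals the expectation of $h(m_I+1)-h(m_I)$. The increment $m_I\to m_I+1$ corresponds exactly to appending one fresh observation $Y_I\sim Q(\cdot|X[I])$, conditionally independent of the rest of $Y(\ueps)$ given $X$, since the $m_I+1$ draws at $I$ are i.i.d.\ and exchangeable. Hence
\begin{align*}
\frac{\partial}{\partial\eps_I}H^{(n)}(X|Y(\ueps)) = H(X|Y(\ueps),Y_I)-H(X|Y(\ueps)) = -I(X;Y_I|Y(\ueps)).
\end{align*}

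Finally, I would reduce $I(X;Y_I|Y(\ueps))$ to $I(X[I];Y_I|Y(\ueps))$ via the chain rule
\begin{align*}
I(X;Y_I|Y(\ueps)) = I(X[I];Y_I|Y(\ueps)) + I(X_{V\setminus[I]};Y_I|X[I],Y(\ueps)),
\end{align*}
whose second term vanishes because $Y_I\sim Q(\cdot|X[I])$ depends on $X$ only through $X[I]$ and is conditionally independent of every other component of $Y(\ueps)$ given $X$, so in particular conditionally independent of $X_{V\setminus[I]}$ given $(X[I],Y(\ueps))$. The main obstacle is purely notational bookkeeping: ensuring that ``incrementing $m_I$'' in the Poisson identity is interpreted faithfully as appending one independent $Q(\cdot|X[I])$-draw while leaving all other observations and their conditional laws intact. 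Uniform integrability for termwise differentiation is automatic since the conditional entropy is bounded by $n\log|\cX|$.
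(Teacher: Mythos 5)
Your proposal is correct and follows essentially the same route as the paper: the Poisson identity $\frac{\partial}{\partial\eps}\E f(Z_\eps)=\E f(Z_\eps+1)-\E f(Z_\eps)$, the interpretation of the increment as one extra independent observation $Y_I\sim Q(\cdot|X[I])$ giving $H(X|Y(\ueps),Y_I)-H(X|Y(\ueps))=-I(X;Y_I|Y(\ueps))$, and the reduction to $X[I]$ using that $Y_I$ depends on $X$ only through $X[I]$. Your last step phrases this reduction via the chain rule for mutual information, while the paper writes $H(Y_I|X,Y(\ueps))=H(Y_I|X_I,Y(\ueps))$; these are the same observation in different packaging, so no substantive difference.
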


We define a \emph{path} as a differentiable map $t\mapsto \ueps(t)$, with $t\in[0,T]$ for some $T\geq 0$.
We say that a path is balanced if 
\begin{align}
\sum_{I\in E_k(V)}\frac{\de \eps_I}{\de t}(t) = 0\, .
\end{align}
We will write $\deps_I(t)$ for the derivative of $\eps_I(t)$ along the
path and  $Y(t)$ for $Y(\ueps(t))$. 
\begin{corollary} \label{coro:BalancedDerivative}
For a balanced path
\begin{align}
\frac{\de \phantom{t}}{\de t} H(X|Y(t)) =-\sum_{I\in E_k(V)}
H(Y_I|Y(t))\; \deps_I(t)\, .
\end{align}
\end{corollary}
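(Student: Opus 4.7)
The plan is to combine Lemma \ref{derivative} with the chain rule and then exploit the balanced condition to cancel a constant term. Writing $H(X|Y(t)) = H^{(n)}(X | Y(\ueps(t)))$ and applying the chain rule gives
\begin{align}
\frac{\de \phantom{t}}{\de t} H(X|Y(t)) = \sum_{I \in E_k(V)} \deps_I(t)\, \frac{\partial \phantom{\eps}}{\partial \eps_I} H^{(n)}(X|Y(\ueps))\bigg|_{\ueps=\ueps(t)} = -\sum_{I \in E_k(V)} \deps_I(t)\, I(Y_I;X_I|Y(t)) .
\end{align}

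Next I would rewrite the conditional mutual information as $I(Y_I;X_I|Y(t)) = H(Y_I|Y(t)) - H(Y_I|X_I,Y(t))$ and argue that the second term is in fact a constant $h_Q$ depending only on the kernel. Since, conditionally on $X$, the auxiliary variable $Y_I$ is drawn from $Q(\cdot|X[I])$ independently of the hypergraph $G$ and of the observations $Y(t)$, we have $H(Y_I|X_I,Y(t)) = H(Y_I|X_I)$. Because $X$ is uniform on $\X^n$, the marginal of $X[I]$ is uniform on $\X^k$ for every $I$, and so $H(Y_I|X_I) = h_Q$ is the same constant for all $I$ and all $t$.

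Substituting back yields
\begin{align}
\frac{\de \phantom{t}}{\de t} H(X|Y(t)) = -\sum_{I \in E_k(V)} \deps_I(t)\, H(Y_I|Y(t)) + h_Q \sum_{I \in E_k(V)} \deps_I(t) ,
\end{align}
and the last sum vanishes by the balanced-path hypothesis $\sum_I \deps_I(t) = 0$, giving the claimed identity. There is no real obstacle here: the whole argument is a one-line chain rule plus the observation that pulling out the $t$-independent piece of the mutual information costs a term proportional to $\sum_I \deps_I(t)$, which is precisely what the balancedness condition is designed to kill. The only point requiring mild care is the conditional independence statement $Y_I \perp Y(t) \mid X$, which follows directly from the product structure of $P_G(\cdot|x)$ in \eqref{direct} and the fact that $Y_I$ is generated from $X[I]$ alone.
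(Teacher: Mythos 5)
Your proposal is correct and follows essentially the same route as the paper: it applies the chain rule to Lemma \ref{derivative}, decomposes $I(Y_I;X_I|Y(t)) = H(Y_I|Y(t)) - H(Y_I|X_I)$ via the Markov chain $Y_I - X - Y(t)$, identifies the second term as the $I$- and $t$-independent constant (the paper's $H(Q)$), and uses balancedness to cancel it. Nothing further is needed.
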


Given a partition $V = V_1\sqcup V_2$, we define the
associated \emph{canonical path}  $\ueps : t \in [0, 1]\to \ueps(t) \in [0,1]^{E_k(V)}$ as
follows. Let $n_i = |V_i|$, $m_i=|E_k(V_i)|$, $i\in \{1,2\}$, and $m =
|E_k(V)|$. We define 
\begin{align}
\eps_I(0) \equiv \frac{\alpha n }{m}, \quad \forall I \in E_k(V),
\end{align}
\begin{align}
\eps_I(1)\equiv \begin{cases}
\frac{\alpha n_1 }{m_1} & \text{ if $I \in E_k(V_1)$} \\
\frac{\alpha n_2 }{m_2} & \text{ if $I \in E_k(V_2)$}\\
0 & \text{ otherwise.}  
\end{cases}
\end{align}
and 
\begin{align}
\ueps(t)= (1-t) \ueps(0) + t \ueps(1). 
\end{align}

%
Note that the canonical path is balanced. Moreover, at time $t=0$, $\mathcal{P}_k(\underline{\e}(0),n)$ reduces to the original ensemble $\mathcal{P}_k(\alpha,n)$, and at time $t=1$, $\mathcal{P}_k(\underline{\e}(1),n)$ reduces to two independent copies of the original ensemble on the subset of $n_1$ and $n_2$ variables: $\mathcal{P}_k(\alpha,n_1) \times \mathcal{P}_k(\alpha,n_2)$.

Applying Lemma \ref{coro:BalancedDerivative}, we obtain the following.
\begin{corollary} \label{coro:CanonicalDerivative}
For the canonical path
\begin{align}
\frac{\de \phantom{t}}{\de t} H(X|Y(t)) 
& = \alpha n
\E_I H(Y_I|Y(t)) 
-\alpha n_1 \E_{I_1}
H(Y_{I_1}|Y(t)) 
- \alpha n_2
\E_{I_2}H(Y_{I_2}|Y(t))
\, ,
\end{align}
where $I$ is drawn uniformly in $E_k(V)$, and $I_i$, $i \in \{1,2\}$, are drawn uniformly in $E_k(V_i)$. 
\end{corollary}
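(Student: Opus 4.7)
The statement is essentially a direct computation: we just need to plug the canonical path into Corollary \ref{coro:BalancedDerivative} and reorganize the resulting sum into the three expectation terms.

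First, I would verify that the canonical path is balanced, so that Corollary \ref{coro:BalancedDerivative} applies. Since $\ueps(t)$ is affine in $t$, the derivative $\deps_I(t) = \eps_I(1) - \eps_I(0)$ is a constant independent of $t$, equal to
\begin{align}
\deps_I = \begin{cases}
\alpha n_1/m_1 - \alpha n/m & \text{if } I \in E_k(V_1), \\
\alpha n_2/m_2 - \alpha n/m & \text{if } I \in E_k(V_2), \\
- \alpha n/m & \text{if $I$ is a crossing edge.}
\end{cases}
\end{align}
Summing $\deps_I$ over the $m_1$ edges in $E_k(V_1)$, the $m_2$ edges in $E_k(V_2)$, and the $m - m_1 - m_2$ crossing edges gives $\alpha n_1 + \alpha n_2 - \alpha n = 0$, confirming the path is balanced.

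Next, I would apply Corollary \ref{coro:BalancedDerivative}, which yields
\begin{align}
\frac{\de \phantom{t}}{\de t} H(X|Y(t)) = -\sum_{I \in E_k(V)} H(Y_I | Y(t))\, \deps_I.
\end{align}
Splitting the sum into the three classes of edges and using that crossing edges contribute terms weighted by $-\alpha n/m$ while $V_1$- and $V_2$-internal edges split into an $\alpha n/m$ piece (with a minus sign) and an $\alpha n_i/m_i$ piece (with a plus sign), I would regroup to obtain
\begin{align}
-\sum_{I \in E_k(V)} H(Y_I|Y(t))\, \deps_I
&= \frac{\alpha n}{m}\sum_{I \in E_k(V)} H(Y_I|Y(t)) \\
&\quad - \frac{\alpha n_1}{m_1}\sum_{I \in E_k(V_1)} H(Y_I|Y(t))
   - \frac{\alpha n_2}{m_2}\sum_{I \in E_k(V_2)} H(Y_I|Y(t)).
\end{align}
Finally, recognizing $(1/m)\sum_{I \in E_k(V)} (\,\cdot\,) = \E_I(\,\cdot\,)$ when $I$ is uniform in $E_k(V)$, and similarly for $I_i$ uniform in $E_k(V_i)$, I would rewrite the three sums as the stated expectations to conclude.

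There is no real obstacle here — the only thing to be careful about is the bookkeeping of the three edge classes and the sign pattern, and the fact that the crossing-edge contribution is precisely the piece that combines with the internal contributions to produce the single homogeneous term $\alpha n \, \E_I H(Y_I|Y(t))$.
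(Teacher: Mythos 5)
Your proposal is correct and follows the same route as the paper, which derives this corollary simply by applying Corollary \ref{coro:BalancedDerivative} to the canonical path: the path is affine so $\deps_I=\eps_I(1)-\eps_I(0)$, the balance check gives $\alpha n_1+\alpha n_2-\alpha n=0$, and splitting the sum over the three edge classes and normalizing by $m$, $m_1$, $m_2$ yields exactly the three expectation terms with the stated signs. No gaps.
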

We recall that 
\begin{align}
H(Y_I|Y(t)) &= -  \E_{Y,Y_I} \log \sum_{x} Q(Y_I | x[I]) R_{G(t)}(x|Y)  \\
&= - \E_{Y(t),Y_I} \log  \E_{X|Y(t)} Q(Y_I | X[I]) \,,
\end{align}
where $Y(t)$ is the output of $P_{G(t)}$ and $\E_{X|Y(t)}$ is the conditional expectation over $R_{G(t)}$.


\begin{lemma}\label{replica}
\begin{align}
&\frac{1}{\alpha |\cY|}\frac{\de \phantom{t}}{\de t} H(X|Y(t)) = - \sum_{l=2}^\infty  \frac{1}{l(l-1)}  \E_{X^{(1)},\dots,X^{(l)}}   
\left[n \Gamma_l(V) - n_1 \Gamma_l(V_1) - n_2 \Gamma_l(V_2) \right]  \label{der}
\end{align}
where 
\begin{align}
\Gamma_l(V) \equiv   \E_{I,W_I} \prod_{r=1}^l \left(1-P(W_I | X^{(r)}[I]) \right), 
\end{align}
$I$ is uniformly drawn in $E_k(V)$, $W_I$ is uniformly drawn in $\Y$, and $X^{(1)},\dots,X^{(l)}$ are drawn under the probability distribution $\sum_{y} \prod_{i=1}^l R_{G(t)}(x^{(i)} | y) \sum_{u} P_{G(t)} (y|u) 2^{-n}$.
\end{lemma}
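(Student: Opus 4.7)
Starting from Corollary \ref{coro:CanonicalDerivative}, the plan is to expand each $H(Y_I | Y(t))$ in a carefully chosen power series in the posterior kernel average. Writing $Z_I(y_I) \equiv \E_{X | Y(t)} Q(y_I | X[I])$, so that $H(Y_I|Y(t)) = \E_{Y(t)} \sum_{y_I}[-Z_I(y_I) \log Z_I(y_I)]$, the key algebraic identity I would use is
\begin{align}
-x \log x = (1-x) - \sum_{l\ge 2} \frac{(1-x)^l}{l(l-1)}, \qquad x \in (0,1],
\end{align}
which follows from $-\log(1-u) = \sum_{l\ge 1} u^l/l$ by multiplying by $1-u$ and recollecting the two resulting series. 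It is preferable to the bare expansion of $-\log$ alone because it produces exactly the $l(l-1)$ denominators that appear on the right-hand side of \eqref{der}.

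\textbf{Replica rewriting.} Applying this identity at $x = Z_I(y_I)$ and summing over $y_I \in \cY$, the linear $(1-x)$ term gives the constant $|\cY|-1$ (since $\sum_{y_I} Z_I(y_I) = 1$), so that
\begin{align}
H(Y_I | Y(t)) = (|\cY|-1) - \sum_{l\ge 2} \frac{1}{l(l-1)} \E_{Y(t)} \sum_{y_I}(1-Z_I(y_I))^l.
\end{align}
For fixed $Y(t)$ the power $(1-Z_I(y_I))^l$ is a product of $l$ identical conditional expectations, so by independence it equals $\E_{X^{(1)},\ldots,X^{(l)}|Y(t)}\prod_{r=1}^l(1-Q(y_I|X^{(r)}[I]))$ with the $X^{(r)}$ conditionally i.i.d.\ under $R_{G(t)}(\cdot|Y(t))$; after taking $\E_{Y(t)}$ the unconditional joint law of $(X^{(1)},\ldots,X^{(l)})$ is precisely the one written in the statement. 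Averaging also over $I$ uniform in $E_k(V)$ and comparing the uniform sum $\sum_{y_I}$ (weight $1$) against the uniform $W_I$ (weight $1/|\cY|$) built into $\Gamma_l$, one obtains $\E_I \E_{Y(t)} \sum_{y_I}(1-Z_I(y_I))^l = |\cY|\,\E\,\Gamma_l(V)$, and the analogous identities for $V_1$ and $V_2$.

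\textbf{Conclusion and main obstacle.} Inserting the three expansions into Corollary \ref{coro:CanonicalDerivative}, the constant pieces combine as $\alpha(n - n_1 - n_2)(|\cY|-1) = 0$, and the remainders assemble into
\begin{align}
\frac{d}{dt} H(X|Y(t)) = -\alpha|\cY|\sum_{l\ge 2}\frac{1}{l(l-1)}\E\bigl[n\Gamma_l(V) - n_1\Gamma_l(V_1) - n_2\Gamma_l(V_2)\bigr],
\end{align}
which is \eqref{der} after dividing by $\alpha|\cY|$. The only analytic point that needs justification is the interchange of $\sum_l$ with the expectations over $Y(t)$, $I$, and the replicas; this is harmless because $0\le Z_I(y_I)\le 1$ gives $\sum_{y_I}(1-Z_I(y_I))^l \le |\cY|$ pointwise, so each summand is dominated by $|\cY|/(l(l-1))$, which is absolutely summable, and dominated convergence closes the argument. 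The real difficulty here is not analytic but the bookkeeping that makes the $\Gamma_l$ structure emerge with exactly the $l(l-1)$ weighting — a weighting that is essential afterwards, because it is what couples with Hypothesis H (convexity of $\Gamma_l$) to fix the sign of $\frac{d}{dt}H$ along the canonical path and thereby close the sub-additivity proof of Theorem \ref{add}.
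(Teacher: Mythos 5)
Your proposal is correct and reaches exactly the paper's formula, but the central algebraic step is done by a genuinely different (if modest) route. The paper starts from the same identity $H(Y_I|Y(t))=-\E_{Y(t),Y_I}\log \E_{X|Y(t)}Q(Y_I|X[I])$, expands only $-\log$ (getting $1/l$ coefficients), and then handles the weight coming from $Y_I\sim Q(\cdot|X[I])$ by a measure switch to a uniform $W_I$ with an extra factor $Q(W_I|X[I])$, renaming the planted assignment $X$ as $X^{(0)}$ and invoking its exchangeability with the posterior replicas; an Abel-type resummation of $\tfrac1l(a_l-a_{l+1})$ then produces the $1/(l(l-1))$ weights plus the leading term $|\cY|\,\E_{W_I}(1-Q(W_I|X^{(1)}[I]))=|\cY|-1$, which cancels by $n=n_1+n_2$ exactly as your constant does. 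You instead fold the weight into the identity $-x\log x=(1-x)-\sum_{l\ge 2}(1-x)^l/(l(l-1))$ applied to $x=Z_I(y_I)$, so the $1/(l(l-1))$ structure appears immediately, the sum over $y_I$ is unweighted and converts directly to the uniform $W_I$, and no exchangeability argument is needed. What the paper's route buys is conceptual: it makes explicit that the planted configuration is exchangeable with the replicas drawn from the posterior, which is precisely where planting flips the interpolation from super- to sub-additive. What your route buys is economy and rigor at the edges: the identity is valid on all of $[0,1]$ (including $Z_I(y_I)=0$, relevant for hard CSP kernels where the bare $\log$ expansion is delicate), and you supply the domination bound $\sum_{y_I}(1-Z_I(y_I))^l\le|\cY|$ justifying the interchange of the series with the expectations, a point the paper passes over silently. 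Both derivations yield the same series termwise, so the proof is complete as written.
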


This means that $X^{(1)},\dots,X^{(l)}$ are drawn i.i.d.\ from the channel $R_{G(t)}$ given a hidden output $Y$, these are the `replica' variables, which are exchangeable but not i.i.d.. 
Note that denoting by $\nu$ the empirical distribution of $X^{(1)},\dots,X^{(l)}$, the above definition of $\Gamma_l(V)$ coincides with that of $\Gamma_l(\nu)$, hence the abuse of notation with definition \eqref{gamma}. Hypothesis H ensures that $\Gamma_l$ is convex for any distribution on $\X^l$, hence in particular for the empirical distribution of the replicas. 
Therefore, previous lemma implies Lemma \ref{add} and Theorem \ref{conv} follows by the sub-additivity property. 


%
\bibliographystyle{amsalpha}
\bibliography{gen}

%

\newpage
\appendix\label{app}
%
%
\section{Proofs of Theorems \ref{conv} and \ref{conc}}\label{main-proof}
We now prove the lemmas used in Section \ref{interpolation-sec} to prove Theorem \ref{conv}. We then prove Theorem \ref{conc}. In the proofs, we may drop the upper-script $n$ for the entropy.

\begin{proof}[Proof of Lemma \ref{derivative}]
Note that for a random variable $Z_\e$ which is Poisson distributed of parameter $\e$, and a function $f$,  
\begin{align}
\frac{\partial \phantom{\eps}}{\partial\eps} \E f(Z_\e) = \E f(Z_\e+1)- \E f(Z_\e).
\end{align}
Therefore, 
\begin{align}
\frac{\partial \phantom{\eps}}{\partial\eps_I} H^{(n)}(X|Y(\ueps)) &= H^{(n)}(X|Y(\ueps),Y_I)- H^{(n)}(X|Y(\ueps)),
\end{align}
where $Y_I$ is an extra output drawn independently from $Y(\ueps)$ but conditionally on the same $X$.  
We also recall the definition of the mutual information,
$I(A;B)=H(A)-H(A|B)=H(B)-H(B|A)$. 
We have
\begin{align}
\frac{\partial \phantom{\eps}}{\partial\eps_I} H(X|Y(\ueps)) &= - I(Y_I;X|Y(\ueps)) \\
&= -  H(Y_I |Y(\ueps)) +  H(Y_I| X,Y(\ueps))\\
&= -  H(Y_I |Y(\ueps)) +  H(Y_I| X_I,Y(\ueps)) \\
&= - I(Y_I;X_I|Y(\ueps)), 
\end{align}
where we used the fact that $Y_I$ depends only on the components of $X$ indexed by $I$. 
\end{proof}

\begin{proof}[Proof of Corollary \ref{coro:BalancedDerivative}]
Note that from previous proof, and using the fact that $Y_I-X-Y(\ueps)$ form a Markov chain, 
\begin{align}
\frac{\partial \phantom{\eps}}{\partial\eps_I} H(X|Y(\ueps)) &= - I(Y_I;X_I|Y(\ueps)) \\
&= -  H(Y_I |Y(\ueps)) +  H(Y_I| X_I) \\
&= -  H(Y_I |Y(\ueps)) +  H(Q) 
\end{align}
where $$H(Q) \equiv  - 2^{-k} \sum_{u \in \X^k, z \in \Y} Q(z|u) \log Q(z|u)$$ is a constant depending on $Q$ only. Therefore, if the path is balanced, the chain rule yields the result. 
\end{proof}

\begin{proof}[Proof of Lemma \ref{replica}]
By definition  
\begin{align}
H(Y_I|Y(t)) = - \E_{Y(t),Y_I} \log  \E_{X|Y(t)} Q(Y_I | X[I]) \,,
\end{align}
and expanding the logarithm in its power series, 
\begin{align}
\log  \E_{X|Y(t)} Q(Y_I | X[I]) &= - \sum_{l=1}^\infty \frac{1}{l}  (\E_{X|Y(t)}(1-Q(Y_I | X[I])))^l .
\end{align}
We now introduce the `replicas' $X^{(1)},\dots,X^{(l)}$, which are i.i.d.\ under $Q_{X|Y(t)}$, i.e., we have the Markov relation $X - Y(t) - (X^{(1)},\dots,X^{(l)})$. Denoting by $\wQ=1-Q$, we obtain 
\begin{align}
\log  \E_{X|Y(t)} Q(Y_I | X[I]) &=- \sum_{l=1}^\infty  \frac{1}{l}   \E_{X^{(1)},\dots,X^{(l)}|Y(t)} \prod_{r=1}^l \wQ(Y_I | X^{(r)}[I])).
\end{align}
As opposed to the non-planted case, where supper-additivity is achieved by showing that terms weighted by $1/l$ are convex in the empirical distribution of the replicas, convexity does not hold with the above expression and the following is needed.  Collecting terms we have 
\begin{align}
H(Y_I|Y(t)) 
&=   \E_X \E_{Y(t)|X} \E_{Y_I|X} \sum_{l=1}^\infty  \frac{1}{l}   \E_{X^{(1)},\dots,X^{(l)}|Y(t)} \prod_{r=1}^l \wQ(Y_I | X^{(r)}[I]))\\
&=   \sum_{l=1}^\infty  \frac{1}{l}   \E_{X,X^{(1)}\dots,X^{(l)}} \E_{Y_I|X} \prod_{r=1}^l \wQ(Y_I | X^{(r)}[I])).
\end{align}
We next write switch measure in the expectation $\E_{Y_I|X}$, defining $W_I$ to be uniformly distributed over $\Y$, and writing 
\begin{align}
H(Y_I|Y(t)) &= |\cY| \sum_{l=1}^\infty  \frac{1}{l}   \E_{X,X^{(1)}\dots,X^{(l)}} \E_{W_I} \prod_{r=1}^l \wQ(W_I | X^{(r)}[I])) Q(W_I | X[I])).
\end{align}
Renaming $X$ by $X^{(0)}$, and using the fact that $X^{(0)},X^{(1)}\dots,X^{(l)}$ are exchangeable, we can write 
\begin{align}
H(Y_I|Y(t))&=|\cY| \sum_{l=1}^\infty  \frac{1}{l}   \E_{X^{(0)},X^{(1)}\dots,X^{(l)}} \left( \E_{W_I} \prod_{r=1}^l \wQ(W_I | X^{(r)}[I])) -  \E_{W_I} \prod_{r=0}^l \wQ(W_I | X^{(r)}[I])) \right) \\
&= |\cY| \E_{X^{(1)}} \E_{W_I}  \wQ(W_I | X^{(1)}[I])) -  |\cY| 
\sum_{l=2}^\infty  \frac{1}{l(l-1)}  \E_{X^{(1)}\dots,X^{(l)}}  \E_{W_I} \prod_{r=1}^l \wQ(W_I | X^{(r)}[I])) .
\end{align}
Recall that from Corollary \ref{coro:CanonicalDerivative}, 
\begin{align}
\frac{\de \phantom{t}}{\de t} H(X|Y(t)) 
& = \alpha n
\E_I H(Y_I|Y(t)) 
-\alpha n_1 \E_{I_1}
H(Y_{I_1}|Y(t)) 
- \alpha n_2
\E_{I_2}H(Y_{I_2}|Y(t))
\, .\nonumber
\end{align}
Hence, carrying out the above expansions for each term, and since the term $\E_{X^{(1)}} \E_{W_I}  \wQ(W_I | X^{(1)}[I]))$ cancels out, the lemma follows.
\end{proof}

\begin{proof}[Proof of Theorem \ref{conc}]
Since we now that $H^{(n)}_G(X|Y)/n$ converges in expectation, it is
sufficient to show that it concentrates around its expectation. Indeed
we claim that there exists $B>0$ such that
\begin{align}
\prob\{|H^{(n)}_G(X|Y)-H^{(n)}(X|Y)|\ge n\Delta\}\le 2\,
e^{-nB\Delta^2}\, ,\label{eq:Concentration}
\end{align}
whence our thesis follows from Borel-Cantelli.

The proof of (\ref{eq:Concentration}) is a direct application of
Azuma-Hoeffding inequality, and we limit ourselves to sketching its
main steps.
We condition on the number $m=O(n)$ of hyperedges in $G$ and regard
$H_G^{(n)}(X|Y)$ as a function of the choice of the $m$
hyperedges. We claim that $|H_{G}^{(n)}(X|Y)-H_{G'}^{(n)}(X|Y) |\le
2C$, for some constant $C$, if $G$ and $G'$ differ only in one of their hyperedges, whence 
Eq.~(\ref{eq:Concentration}) follows by Azuma-Hoeffding.

In order to prove the last claim, let $G'=G+a$ denote the graph $G$ to
which hyperedge $a=(i_1,\dots,i_k)$ has been added. Then, writing
explicitly the component of $Y$ corresponding to hyperedge $a$, we
need to prove that
$|H_{G+a}^{(n)}(X|Y,Y_a)-H_G^{(n)}(X|Y)|\le C$. We have,
dropping the subscripts and superscript for the sake of simplicity,
\begin{align}
0\le H(X|Y)- H(X|Y,Y_a)&=H(X|Y) -H(X,Y_a|Y)+H(Y_a|Y) \\ &=
H(Y_a|Y)-H(Y_a|X,Y) \\& \le \log_2|\cY|\, ,
\end{align}
where the last inequality follows from the fact that $Y_a$ takes value in the finite set $\Y$.
\end{proof}
%
%
\section{Proofs of Lemmas \ref{counting-lemma}, \ref{k-sat-lemma}, \ref{k-nae-sat-lemma} and \ref{k-xor-sat-lemma} }\label{csp-proof}

\begin{proof}[Proof of Lemma \ref{counting-lemma}]
We have
\begin{align}
P_g(y|x) &= \prod_{I \in E(g)} Q(y_I|x[I]) \\
&= \prod_{I \in E(g)} \frac{1}{|A|}  \mathds{1} (y_I \in A(x[I]))\\
&=\begin{cases}
\frac{1}{|A|^{|E(g)|}} & \text{ if } x \sim y, \\
0 & \text{ otherwise, } 
\end{cases}
\end{align}
where $x \sim y$ means that $x$ is a satisfying assignment for $y$.  
Hence for a given $x$, $P_g(\cdot | x)$ is uniform on the set of all $y$'s verifying $x$, which has cardinality $|A|^{|E(g)|}$. 
Since $X$ is uniform, for a given $y$, $R_g(\cdot|y)$ is a uniform measure on a set of cardinality 
\begin{align}
\sum_{x \in \X^n} \prod_{I \in E(g)}  \mathds{1} (y_I \in A(x[I])) 
&= |\{x \in \X^n:  y_I \in A(x[I]), \forall I \in E(g) \}| = Z_g(y)
\end{align}
Therefore $H_g(X|Y=y)= \log Z_g(y)$.
\end{proof}

\begin{proof}[Proof of Lemma \ref{k-sat-lemma}]
For planted $k$-SAT, $\Y=\X^k=\{0,1\}^k$,  
\begin{align}
Q(z|u) &=  \frac{1}{2^k-1} \mathds{1} (z \neq  \bar{u}) 
\end{align}
and 
\begin{align}
 \Gamma_l(\nu)&= \frac{1}{2^k} \left(\frac{1}{2^k-1}\right)^l \sum_{u^{(1)}, \dots,u^{(l)} \in \cX^k} \left[ \sum_{z \in \cY}  \prod_{r=1}^l  \mathds{1}(  \bar{z} = u^{(r)})  \right]  \prod_{i=1}^k \nu(u_i^{(1)},\dots,u_i^{(l)}) \\
&= \frac{1}{2^k} \left(\frac{1}{2^k-1}\right)^l \sum_{u \in \cX^k}   \prod_{i=1}^k \nu(u_i,\dots,u_i) \\
&= \frac{1}{2^k} \left(\frac{1}{2^k-1}\right)^l \sum_{u_1,\dots,u_k \in \cX}   \prod_{i=1}^k \nu(u_i,\dots,u_i) \\
&= \frac{1}{2^k} \left(\frac{1}{2^k-1}\right)^l \left(\sum_{u_1 \in \cX}   \nu(u_1,\dots,u_1)\right)^k,
\end{align}
which is convex in $\nu$ for any $k,l \geq 1$.
\end{proof}

\begin{proof}[Proof of Lemma \ref{k-nae-sat-lemma}]
For planted $k$-NAE-SAT, $\Y=\X^k=\{0,1\}^k$, 
\begin{align}
Q(z|u) &=  \frac{1}{2^k-2} \mathds{1} (z \notin (u, \bar{u})) 
\end{align}
and 
\begin{align}
 \Gamma_l(\nu)&= \frac{1}{2^k} \left(\frac{1}{2^k-2}\right)^l \sum_{u^{(1)}, \dots,u^{(l)} \in \cX^k} \left[ \sum_{z \in \cY}  \prod_{r=1}^l  \mathds{1} (u^{(r)} \in (z,\bar{z}) )  \right]  \prod_{i=1}^k \nu(u_i^{(1)},\dots,u_i^{(l)}) \\
&= \frac{1}{2^k} \left(\frac{1}{2^k-2}\right)^l\sum_{b_1,\dots,b_l \in \X} \sum_{u \in \cX^k}   \prod_{i=1}^k \nu(u_i \oplus b_1,\dots,u_i\oplus b_l) \\
&= \frac{1}{2^k} \left(\frac{1}{2^k-2}\right)^l \sum_{b_1,\dots,b_l \in \X} \left(\sum_{u_1 \in \cX}   \nu(u_1 \oplus b_1,\dots,u_1 \oplus b_l)\right)^k,
\end{align}
which is convex in $\nu$ for any $k,l \geq 1$.
\end{proof}

\begin{proof}[Proof of Lemma \ref{k-xor-sat-lemma}]
This is a special case of Lemma \ref{fourier} for $s=1$, $d=-1$. 
\end{proof}
%
%
\section{Proofs of Lemmas \ref{andrea-lemma} and \ref{annoying-lemma} }\label{sbm-proof}
\begin{proof}[Proof of Lemma \ref{andrea-lemma}]
We introduce a new collection of random variables $\{Z_{ij}\}_{(ij)\in
  E_2(V)}$, taking values in $\{0,1,\ast\}$, and indexed by the $\binom{n}{2}$ edges of the complete graph
over vertex set $V$. These are conditionally independent given $X$
with distribution given as follows:
\begin{align}
Z_{ij} \big|_{X_i=X_j}&= \begin{cases}
1 & \mbox{with probability }a/n,\\
0 & \mbox{with probability }(2\gamma-a)/n,\\
\ast & \mbox{with probability }1-2\gamma/n,\\
\end{cases}\\
Z_{ij} \big|_{X_i\ne X_j}&= \begin{cases}
1 & \mbox{with probability }b/n,\\
0 & \mbox{with probability }(2\gamma-b)/n,\\
\ast & \mbox{with probability }1-2\gamma/n,\\
\end{cases}
\end{align}
The following claim is proved below. 
\begin{lemma}\label{lemma:Coupling}
There exists a constant $C = C(\gamma)<\infty$ such that,
uniformly in $n$
\begin{align}
\big|H^{(n)}(X|Y_{\gamma}) -H^{(n)}(X|Z)\big|\le C(\gamma)\, .
\end{align}
\end{lemma}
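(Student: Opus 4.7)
The goal is $|H^{(n)}(X|Y_\gamma) - H^{(n)}(X|Z)| \leq C(\gamma)$ with $C(\gamma)$ independent of $n$. My plan is to use the identity
\begin{align*}
H(X|Y_\gamma) - H(X|Z) = \bigl[H(Z) - H(Y_\gamma)\bigr] - \bigl[H(Z|X) - H(Y_\gamma|X)\bigr],
\end{align*}
which follows from $H(X|A) = H(X) + H(A|X) - H(A)$, together with an edge-wise coupling of $(Y_\gamma, Z)$ given $X$, and to show that the two bracketed quantities---each of order $\Theta(n)$ in general---cancel up to $O(1)$.

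For each pair $I = (i,j) \in E_2(V)$ I would construct $(Y_\gamma|_I, Z_I)$ jointly given $(X_i, X_j)$ with the prescribed marginals and conditional independence across $I$ given $X$. The coupling matches the three ``typical'' outcomes of $Y_\gamma|_I$---namely $\{m_I = 0\}$, and $\{m_I = 1\text{ with unique output }0\text{ or }1\}$---bijectively with $Z_I \in \{\ast, 0, 1\}$, using an auxiliary resampling on the rare event $\{m_I \geq 2\}$ (probability $O(\gamma^2/n^2)$ per edge) and on the Poisson-vs-Bernoulli marginal mismatch. Because the three coupled regimes exhaust all but $O(1/n^2)$ of the per-edge probability, one obtains the per-edge bounds
\begin{align*}
H(Y_\gamma|_I \mid Z_I, X_i, X_j) + H(Z_I \mid Y_\gamma|_I, X_i, X_j) = O(1/n^2)
\end{align*}
uniformly in $(X_i, X_j)$. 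Summing over the $\binom{n}{2}$ pairs using conditional independence given $X$, together with the elementary identity $H(A|X) - H(B|X) = H(A|B,X) - H(B|A,X)$, yields $\bigl|H(Z|X) - H(Y_\gamma|X)\bigr| = O(1)$.

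The main obstacle is bounding the unconditional difference $|H(Z) - H(Y_\gamma)|$. Unlike its conditional-on-$X$ counterpart, this difference does not factor over edges, and the identity $|H(Z) - H(Y_\gamma)| = |H(Z|Y_\gamma) - H(Y_\gamma|Z)|$ involves unconditional conditional entropies that are individually $\Theta(n)$---each carries the ``extra'' edge-wise entropy from the coupling coins that determine which side of the bijection is recovered once $X$ is marginalized out. To make the identity yield $O(1)$, one must show the $\Theta(n)$ contributions to $H(Z|Y_\gamma)$ and $H(Y_\gamma|Z)$ cancel. I would establish this by writing $H(Y_\gamma|Z) - H(Z|Y_\gamma) = H(Y_\gamma|Z,X) - H(Z|Y_\gamma,X) + [I(X;Y_\gamma|Z) - I(X;Z|Y_\gamma)]$ and showing that both mutual-information terms arise from the same per-edge ``uncoupled randomness'' on the $\Theta(\gamma n)$ informative edges, so their difference reduces to a sum of per-edge corrections of order $O(1/n^2)$ that aggregate to $O(1)$. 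Combined with the bound on the first bracket, this gives $|H^{(n)}(X|Y_\gamma) - H^{(n)}(X|Z)| \leq C(\gamma)$; this cancellation step is the technical heart of the argument and is where the precise structure of the Poisson-to-Bernoulli coupling (matching Poisson marks of intensity $\sim 2\gamma/n$ with Bernoulli observations of mass $\sim 2\gamma/n$) is crucial.
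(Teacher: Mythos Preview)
Your argument contains a genuine circularity at the step you yourself flag as the ``technical heart.'' You write
\begin{align*}
H(Y_\gamma|Z) - H(Z|Y_\gamma) = \bigl[H(Y_\gamma|Z,X) - H(Z|Y_\gamma,X)\bigr] + \bigl[I(X;Y_\gamma|Z) - I(X;Z|Y_\gamma)\bigr]
\end{align*}
and propose to show the bracketed mutual-information difference is $O(1)$. But by the chain rule,
\begin{align*}
I(X;Y_\gamma|Z) - I(X;Z|Y_\gamma) = \bigl[H(X|Z)-H(X|Y_\gamma,Z)\bigr]-\bigl[H(X|Y_\gamma)-H(X|Y_\gamma,Z)\bigr] = H(X|Z)-H(X|Y_\gamma),
\end{align*}
which is \emph{exactly} the quantity you are trying to bound. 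If you trace your decomposition through, writing $\Delta=H(X|Y_\gamma)-H(X|Z)$, $(A)=H(Z)-H(Y_\gamma)$, $(B)=H(Z|X)-H(Y_\gamma|X)$, you get $(A)=(B)+\Delta$ and hence $\Delta=(A)-(B)=\Delta$, a tautology. So the cancellation you hope for is not an independent fact you can establish from the coupling; it is the lemma itself in disguise. The edge-wise control you obtain on the conditional-on-$X$ quantities is correct but, by itself, gives no leverage on $\Delta$.

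The paper avoids this decomposition entirely and works directly with $H(X|\cdot)$ via a bounded-difference argument. It views both $Y_{\gamma,ij}$ and $Z_{ij}$ as vectors of random length (Poisson$(2\gamma/n\cdot(1+o(1)))$ versus Bernoulli$(2\gamma/n)$), couples them so that $\E|\ell(Y_{\gamma,ij})-\ell(Z_{ij})|\le C/n^2$ with matching entries when both are nonempty, and then uses the elementary observation that appending or deleting a single channel output $Y'_{ij}$ changes $H(X|\cdot)$ by at most $\log|\mathcal Y|$ (here $1$), since
\begin{align*}
\bigl|H(X|Y,Y'_{ij})-H(X|Y)\bigr| = \bigl|H(Y'_{ij}|X,Y)-H(Y'_{ij}|Y)\bigr|\le \log|\mathcal Y|.
\end{align*}
Summing over edges, $|H(X|Y_\gamma)-H(X|Z)|\le \sum_{(i,j)}\E|\ell(Y_{\gamma,ij})-\ell(Z_{ij})|\le \binom{n}{2}\cdot C/n^2\le C(\gamma)$. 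This sidesteps the unconditional entropies $H(Y_\gamma)$, $H(Z)$ altogether, which is precisely where your approach runs aground.
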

It is therefore sufficient to bound the difference
$|H^{(n)}(X|Y)-H^{(n)}(X|Z)|$. Notice that the variables $X,Y,Z$ can be
constructed on the same probability space in such a way that
$X-Z-Y$ form a Markov chain. Namely, it is sufficient to let the
$Y_{ij}$ be conditionally independent, and independent from $X$ given
$Z$, with 
\begin{align}
Y_{ij} = \begin{cases}
1 & \mbox{ if }Z_{ij} = 1,\\
0 & \mbox{ if }Z_{ij} \in \{0,\ast\}.\\
\end{cases}
\end{align}
We therefore have that $H^{(n)}(X|Z)\le H^{(n)}(X|Y)$ and
we are therefore left with the task of upper bounding
$H^{(n)}(X|Y)-H^{(n)}(X|Z)$.  In the rest of this proof we will omit
the superscript $(n)$ as it is will be fixed throughout.

We have, by the Markov property and the chain rule of conditional entropy
\begin{align}
H(X|Z) & = H(X|Y,Z)\\
& = H(X,Z|Y)-H(Z|Y)\\
& = H(X|Y) +H(Z|X,Y)- H(Z|Y)\, ,
\end{align}
and therefore
\begin{align}
H(X|Y)-H(X|Z) = H(Z|Y)-H(Z|X,Y)\, .
\end{align}
Note that, by subaddittivity of the entropy, and since conditioning
reduces entropy, we have
\begin{align}
H(Z|Y)\le \sum_{(i,j)\in E_2(V)} H(Z_{ij}|Y) \le \sum_{(i,j)\in E_2(V)} H(Z_{ij}|Y_{ij}) \, .
\end{align}
Further, by conditional independence of the $\{Z_{ij}\}$ given $X$,
$Y$, we have
\begin{align}
H(Z|X,Y) = 
 \sum_{(i,j)\in E_2(V)} H(Z_{ij}|X,Y) = \sum_{(i,j)\in E_2(V)} H(Z_{ij}|X_i,X_j,Y_{ij}) \, .
\end{align}
We therefore conclude that 
\begin{align}
H(X|Y)-H(X|Z) \le\sum_{(i,j)\in E_2(V)} \big\{H(Z_{ij}|Y_{ij})-H(Z_{ij}|X_i,X_j,Y_{ij})\big\}\, .
\end{align}
A simple calculation yields $H(Z_{ij}|Y_{ij}) = f_n((a+b)/2)$ and
$H(Z_{ij}|X_i,X_j,Y_{ij}) = f_n(a)+f_n(b))/2$, where
\begin{align}
f_n(c) \equiv -\frac{2\gamma-c}{n}\log \Big(\frac{2\gamma-c}{n}\Big) -
\Big(1-\frac{2\gamma}{n}\Big) \log\Big(1-\frac{2\gamma}{n}\Big)
+\Big(1-\frac{c}{n}\Big)\log\Big(1-\frac{c}{n}\Big)\, ,
\end{align}
Subtracting an affine term, we can write
\begin{align}
f_n(c) &= f_{n,0} + f_{n,1}c + g_n(c)\, ,\\
g_n(c) & =
-\frac{2\gamma}{n}\Big[\Big(1-\frac{c}{2\gamma}\Big)\log\Big(1-\frac{c}{2\gamma}\Big)+\frac{c}{2\gamma}\Big]
+\Big[\Big(1-\frac{c}{n}\Big)\log\Big(1-\frac{c}{n}\Big)+\frac{c}{n}\Big]\, .
\end{align}
Note that, for $x\in[-1/2,41/2]$, we have $0\le (1-x)\log(1-x)+x\le
x^2$. Hence, for $\gamma\ge c$, $n\ge 2c$, 
\begin{align}
0\ge g_n(c)\ge \frac{c^2}{n} \Big[\frac{1}{n}-\frac{1}{2\gamma}\Big]\ge
-\frac{c^2}{2n\gamma}\, ,
\end{align}
and therefore
\begin{align*}
H(X|Y)-H(X|Z) &\le \binom{n}{2}\Big[
f_n\Big(\frac{a+b}{2}\Big) - \frac{1}{2}f_n(a)-\frac{1}{2}f_n(b)
\Big]\\
& = \binom{n}{2}\Big[
g_n\Big(\frac{a+b}{2}\Big) - \frac{1}{2}g_n(a)-\frac{1}{2}g_n(b)
\Big]\\
&\le \frac{n^2}{2}\, \frac{1}{2n\gamma}\, \frac{(a-b)^2}{4} =
\frac{n(a-b)^2}{16\gamma}\, .
\end{align*}
This finishes the proof.
\end{proof}

\begin{proof}[Proof of Lemma \ref{lemma:Coupling}]
We write $Y_{\gamma} = \{Y_{\gamma,ij}\}_{(i,j)\in E_2(V)}$ where, for
each $(i,j)\in E_2(V)$, $Y_{\gamma,ij}$ is a vector containing
Poisson$(n\gamma/\binom{n}{2})$ entries, each being an independent
output of the channel $Q$ in Eq.~(\ref{sbm-kernel}) on input $(X_i,X_j)$. Analogously, we can
interpret $Z_{ij}$ as a vector of length Bernoulli$(2\gamma/n)$, with
the length $0$ corresponding to the value $\ast$. When the length of
the vector is equal to one, its entry is distributed as the output of
the same channel $Q$.

Let $\ell(Y_{\gamma,ij})$ and $\ell(Z_{ij})$ denote the length of
vectors $Y_{\gamma,ij}$ and $Z_{ij}$. It follows from standard
estimates on Poisson random variables that $Y$ and $Z$ can be coupled
in such a way that $\E \{|\ell(Y_{\gamma,ij})-\ell(Z_{ij})|\}\le C/n^2$
with $C=C(\gamma)$ and further, whenever $\ell(Z_{ij}) =1$ and
$\ell(Y_{\gamma,ij})\ge 1$, the first entry of the vector
$Y_{\gamma,ij}$ is equal to the only entry in $Z_{ij}$.

Finally notice that 
\begin{align}
H(X|Y_{\gamma},\ell(Y_{\gamma,ij})&=\ell_0+1)-H(X|Y_{\gamma},\ell(Y_{\gamma,ij})=\ell_0)\\
&=
H(Y'_{ij}|X,\ell(Y_{\gamma,ij})=\ell_0)-
H(Y'_{ij}|Y_{\gamma},\ell(Y_{\gamma,ij})=\ell_0)\, ,
\end{align}
with $Y'_{ij}$ distributed as an independent output of the channel $Q$
on input $X_i,X_j$, It follows that
 $|H(X|Y_{\gamma},\ell(Y_{\gamma,ij})=\ell_0+1)-H(X|Y_{\gamma},\ell(Y_{\gamma,ij})=\ell_0)|\le
 1$ and therefore
\begin{align}
|H(X|Y_{\gamma})-H(X|Z)|\le \sum_{(i,j)\in E_2(V)} \E
\{|\ell(Y_{\gamma,ij})-\ell(Z_{ij})|\}\le n^2\, \frac{C}{n^2}\le C\, .
\end{align}
\end{proof}

\begin{proof}[Proof of Lemma \ref{annoying-lemma}]
This is a special case of Lemma \ref{fourier} below, with $s=(a+b)/\gamma$ and $d=(a-b)/\gamma$. If $\gamma$ is large enough, $s \leq 1$ and if $a \leq b$, $d \geq 0$, and all the coefficients in \eqref{fourier-formula} are positive.  
\end{proof}

\begin{lemma}\label{fourier}
If $\cX=\cY=\{0,1\}$, $Q(y|x_1,\dots,x_k)=W(y|\oplus_{i=1}^k x_i)$ and $W$ is an arbitrary binary input/output channel, then
\begin{align}
\Gamma_l(\nu) = \frac{1}{2} \sum_{w \in \F_2^l} d^{|w|} \left[  s^{l-|w|} + (-1)^{|w|} (2-s)^{l-|w|} \right] \cF(\nu)^k(w)
\end{align}
where $s=W(1|0)+W(1|1)$, $d=W(1|0)-W(1|1)$, $|w|=\sum_{i=1}^l w_i$ and $\cF(\nu)(w)=\sum_{x \in \F_2^l} (-1)^{x \cdot w} \nu(x)$ is the Fourier-Walsh transform of $\nu$ (where $x \cdot w$ denotes the dot product of $x$ and $w$). 
\end{lemma}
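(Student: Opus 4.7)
The plan is to perform a Fourier--Walsh expansion of the binary function $x \mapsto 1-W(z\mid x)$, exploit the multiplicativity of the additive characters on $\F_2$ to turn the XOR appearing in $Q(z\mid u^{(r)})=W(z\mid \oplus_i u^{(r)}_i)$ into a product, and thereby factor the sum over $u \in (\F_2^k)^l$ into $k$ identical pieces each equal to $\cF(\nu)(w)$. Three short steps carry this out.

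First, for each $z\in\{0,1\}$ decompose $1-W(z\mid x)=\hat q_z(0)+\hat q_z(1)(-1)^x$ on $\F_2$. A direct two-line computation in terms of $s=W(1\mid 0)+W(1\mid 1)$ and $d=W(1\mid 0)-W(1\mid 1)$ gives $\hat q_0(0)=s/2$, $\hat q_0(1)=d/2$, $\hat q_1(0)=(2-s)/2$, $\hat q_1(1)=-d/2$. Writing $v^{(r)}=\oplus_{i=1}^k u_i^{(r)}$, I then expand
\begin{align}
\prod_{r=1}^l\bigl(1-W(z\mid v^{(r)})\bigr)=\sum_{w\in\F_2^l}\hat q_z(0)^{l-|w|}\hat q_z(1)^{|w|}(-1)^{w\cdot v}\,,
\end{align}
sum over $z\in\{0,1\}$, and collect the coefficient of $(-1)^{w\cdot v}$ as $d^{|w|}\bigl[s^{l-|w|}+(-1)^{|w|}(2-s)^{l-|w|}\bigr]$ (times the uniform factor $2^{-l}$ coming from the Fourier coefficients).

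Second, I use the character identity that carries the XOR through the product: setting $u_i=(u_i^{(1)},\dots,u_i^{(l)})\in\F_2^l$ for each $i\in[k]$,
\begin{align}
w\cdot v \;=\;\sum_{r=1}^l w_r\,\bigoplus_{i=1}^k u_i^{(r)}\;\equiv\;\sum_{i=1}^k\sum_{r=1}^l w_r u_i^{(r)}\;=\;\sum_{i=1}^k w\cdot u_i\pmod 2\,,
\end{align}
so that $(-1)^{w\cdot v(u)}=\prod_{i=1}^k(-1)^{w\cdot u_i}$. Consequently,
\begin{align}
\sum_{u\in(\F_2^k)^l}\Bigl[\prod_{i=1}^k\nu(u_i)\Bigr](-1)^{w\cdot v(u)}=\prod_{i=1}^k\Bigl[\sum_{u_i\in\F_2^l}\nu(u_i)(-1)^{w\cdot u_i}\Bigr]=\cF(\nu)^k(w)\,.
\end{align}

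Third, substituting the expansion of the bracketed sum into the definition of $\Gamma_l$ and dividing by $|\cY|=2$ assembles the three pieces and yields the stated formula. The proof is thus a direct algebraic computation with no substantive obstacle; the only place that requires care is bookkeeping of the $2^{-l}$ normalisations from the Fourier coefficients and verifying the character identity that turns $v^{(r)}=\bigoplus_i u_i^{(r)}$ into a product over $i$ under $(-1)^{\,\cdot\,}$.
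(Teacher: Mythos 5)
Correct, and essentially the paper's own route: the paper passes to the XOR variables via $\nu^{\star k}$ and computes the Walsh transform of $\gamma(v)=\sum_y\prod_{r}(1-W(y|v^{(r)}))$, while you expand $1-W(y|\cdot)$ into characters first and factor the node sum directly, which is the same Fourier--Walsh computation in a different order and yields the same coefficients $d^{|w|}\bigl[s^{l-|w|}+(-1)^{|w|}(2-s)^{l-|w|}\bigr]$. One small remark: the $2^{-l}$ you flag from the Fourier coefficients does not disappear---with the unnormalised transform $\cF(\nu)(w)=\sum_x(-1)^{x\cdot w}\nu(x)$ the displayed identity holds only up to that factor (the paper's statement has the same normalisation slip, which is harmless since a positive constant does not affect the sign structure or the convexity conclusions).
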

Note that $ \cF(\nu)(w)$ is linear in $\nu$, hence  
\begin{itemize}
\item For $s=1$, i.e., for symmetric channels, 
\begin{align}
\Gamma_l(\nu) =  \sum_{w \in \F_2^k: \, |w| \text{ even}} d^{|w|}  \cF(\nu)^k(w)
\end{align} 
and $\Gamma_l$ is convex when $k$ is even. 
\item If $s \geq 1$, $d \geq 0$ or $s \leq 1$, $d \leq 0$, then $\Gamma_l$ is convex when $k$ is even. 
\end{itemize}

\begin{proof}[Proof of Lemma \ref{fourier}]
We have
\begin{align}
\Gamma_l(\nu)&= \frac{1}{2} \sum_{u^{(1)}, \dots,u^{(l)} \in \F_2^k} \left[ \sum_{y \in \F_2}  \prod_{r=1}^l  (1-P(y|u^{(r)}))  \right]  \prod_{i=1}^k \nu(u_i^{(1)},\dots,u_i^{(l)})
\end{align}
and using the fact that $P(y|u^{(r)}) = W(y|\oplus_{i=1}^k u_i^{(r)})$ ,
\begin{align}
\Gamma_l(\nu)&=\frac{1}{2}  \sum_{v^{(1)}, \dots,v^{(l)} \in \F_2} \left[ \sum_{y \in \F_2}  \prod_{r=1}^l  (1-W(y|v^{(r)}))  \right]   \nu^{\star k}(v^{(1)},\dots,v^{(l)})\\
&=\frac{1}{2}  \sum_{v \in \F_2^l}  \gamma(v)  \nu^{\star k}(v)
\end{align}
where 
\begin{align}
\gamma(v) &\equiv  \sum_{y \in \F_2}  \prod_{r=1}^l  (1-W(y|v^{(r)}))= (1-a)^{l-|v|} (1-b)^{|v|} + a^{l-|v|} b^{|v|} 
\end{align}
and $a=W(1|0)$, $b=W(1|1)$. 
Note that 
\begin{align}
a^{l-|v|} b^{|v|} \quad \stackrel{\cF}{\longleftrightarrow} \quad (a+b)^{l-|w|} (a-b)^{|w|}, \label{pair}
\end{align}
hence 
\begin{align}
\cF(\gamma)(w) = (2-(a+b))^{l-|w|} (a-b)^{|w|} (-1)^{|w|} + (a+b)^{l-|w|} (a-b)^{|w|} \,. \label{fourier-formula}
\end{align}
\end{proof}
\begin{proof}[Proof of \eqref{pair}]
To show that 
\begin{align}
\F_2^l \ni v \mapsto \rho^{|v|}   \quad \stackrel{\cF}{\longleftrightarrow} \quad \F_2^l \ni w \mapsto (1+\rho)^{l-|w|} (1-\rho)^{|w|}
\end{align}
note that the identity is true when $l=1$ and assume it to be true for $l$.
Then for $l+1$
\begin{align}
\sum_{v \in \F_2^{l+1}} \rho^{|v|} (-1)^{|vw|} &= \sum_{v \in \F_2^{l}} \rho^{|v|} (-1)^{|v w_1^l|} +\rho^{|v|+1} (-1)^{|v w_1^l|} (-1)^{w_{l+1}}  \\
&= \sum_{v \in \F_2^{l}} \rho^{|v|} (-1)^{|v w_1^l|} (1+\rho (-1)^{w_{l+1}}) \\
&= (1+\rho)^{l-|w_1^l|} (1-\rho)^{|w_1^l|}  (1+\rho (-1)^{w_{l+1}}) \,.
\end{align}
\end{proof}

%
%
\section{Proofs of Lemma \ref{bsc-lemma} }\label{coding-proof}

\begin{proof}
We represent the channel $W$ as a $2 \times |\Y|$ stochastic matrix. 
By definition of BISO channels, this matrix can be decomposed into pairs of columns which are symmetric as 
\begin{align}
\begin{pmatrix}
c & d \\
d & c
\end{pmatrix}
\end{align}
with $c,d\geq 0$, 
or into single columns which have constant values. Let us assume that $W$ contains $m$ such matrices and $s$ such constant columns. 
We have 
\begin{align}
\Gamma_l(\nu)&= \frac{1}{|\Y|} \sum_{u^{(1)}, \dots,u^{(l)} \in \F_2^k} \left[ \sum_{y \in \Y}  \prod_{r=1}^l  (1-P(y|u^{(r)}))  \right]  \prod_{i=1}^k \nu(u_i^{(1)},\dots,u_i^{(l)})\\
&=\frac{1}{|\Y|}  \sum_{v^{(1)}, \dots,v^{(l)} \in \F_2} \left[ \sum_{y \in \Y}  \prod_{r=1}^l  (1-W(y|v^{(r)}))  \right]   \nu^{\star k}(v^{(1)},\dots,v^{(l)})\\
&=\frac{1}{|\Y|}  \sum_{v \in \F_2^l}  g(v)  \nu^{\star k}(v)\\
&=\frac{1}{|\Y|}  \sum_{w \in \F_2^l}  \cF(g)(w)  \cF(\nu)^k(w) 
\end{align}
where 
\begin{align}
g(v) = \sum_{i=1}^m \left(C_i^{l-|v|} D_i^{|v|} +D_i^{l-|v|} C_i^{|v|} \right)  + \sum_{i=1}^s E_i^l,
\end{align}
for some positive constants $C_i,D_i$, $i \in [m]$, $E_i$, $i \in [s]$.
Moreover, using \eqref{pair}, 
\begin{align}
C^{l-|v|} D^{|v|} +D^{l-|v|} C^{|v|} \quad \stackrel{\cF}{\longleftrightarrow} \quad & (C+D)^{l-|w|} (C-D)^{|w|} + (C+D)^{l-|w|} (D-C)^{|w|}\\
\phantom{C^{l-|v|} D^{|v|} +D^{l-|v|} C^{|v|} } &= (C+D)^{l-|w|} (C-D)^{|w|} (1 + (-1)^{|w|}),
\end{align}
and $\cF(g)(w)$ has only positive coefficients since only the terms with $|w|$ even survive. Hence $\Gamma_l$ is convex when $k$ is even. 
\end{proof}

\end{document}